\newcommand{\bp}{\begin{proof}}
\newcommand{\ep}{\end{proof}}
\newcommand{\be}{\begin{equation}}
\newcommand{\ee}{\end{equation}}
\newcommand{\e}{\varepsilon}
\newcommand{\Q}{Q}
\newcommand{\R}{\mathbb R}
\newcommand{\wcs}{W_{cs}}
\newcommand{\bwcs}{\bar W_{cs}}
\newcommand{\pcs}{\Phi_{cs}}
\newcommand{\zz}{\zeta}
\newcommand{\Ve}{V^{\e}}
\newcommand{\wcse}{\wcs^{\e}}
\newcommand{\wcsu}{W^{\e}_{cs 1 }}
\newcommand{\wcsd}{W^{\e}_{cs 2 }}
\newcommand{\wcsz}{W^0_{cs}}
\newtheorem{thm}{Theorem}[section]
\newtheorem{lemma}[thm]{Lemma}
\newtheorem{prop}[thm]{Proposition}
\numberwithin{equation}{section}
\theoremstyle{definition}
\newtheorem{remark}{Remark}[section]
\renewenvironment{proof}[1][Proof]{\textbf{#1.} }{\ \rule{0.5em}{0.5em}}
\numberwithin{equation}{section}
\begin{document}
\bibliographystyle{plain}
\title[Boundary layers for the self-similar viscosity ]{Boundary layers for self-similar viscous approximations of nonlinear hyperbolic systems}
\author{Cleopatra Christoforou \and Laura V.  Spinolo}

\address{C.C.: 
Department of Mathematics and Statistics\\
University of Cyprus\\
1678 Nicosia, Cyprus}
\email{Chistoforou.Cleopatra@ucy.ac.cy}
\urladdr{http://www2.ucy.ac.cy/$\sim$kleopatr}

\address{L.V.S.: Institut f\"ur Mathematik \\
Universit\"at Z\"urich,
Winterthurerstrasse 190,
CH-8057 Z\"urich, 
Switzerland }
\email{laura.spinolo@math.uzh.ch}
\urladdr{http://user.math.uzh.ch/spinolo/}

\keywords{hyperbolic systems; boundary Riemann problem; self-similar viscous approximation; vanishing viscosity; boundary layer.}

\subjclass[2000 ]{35L65, 35L50, 35K51.}
%

\dedicatory{ Dedicated to Professor Constantine M. Dafermos on the occasion of his 70th birthday}

\begin{abstract}
We provide a precise description of the set of residual boundary conditions generated by the self-similar viscous approximation introduced by Dafermos et al. We then apply our results, valid for both conservative and non conservative systems, to the analysis of the boundary Riemann problem and we show that, under appropriate assumptions, the limits of the self-similar and  the classical vanishing viscosity approximation coincide. We require neither genuinely nonlinearity nor linear degeneracy of the characteristic fields.   
\end{abstract}

\maketitle
\section{Introduction and main results}\label{S: Introduction}
We are interested in systems of conservation laws in one space dimension
\be
\label{e:cl}
        \partial_t U + \partial_x \big[ F(U) \big] =0,
\ee
where  $U= U(t, x)$ attains values in $\R^n$, $t \ge 0$ and $x \in \R$ are the scalar independent variables and the flux function $F: \R^n \to \R^n$ is regular. System~\eqref{e:cl} is strictly hyperbolic if the Jacobian matrix $DF(U)$ has $n$ real, distinct eigenvalues 
\be
\label{sh}
     \lambda_1 (U) < \lambda_2 (U) < \dots < \lambda_n (U) 
\ee     
for every $U \in \R^n$. We refer to the books by Dafermos~\cite{D} and Serre~\cite{Serre:book} for an exposition of the current state of the theory of hyperbolic conservation laws. 

In the present paper, we characterize the set of residual boundary conditions generated by the self-similar viscous approximation introduced by Dafermos~\cite{D1} et al. and we then apply our results, valid for both conservative and non conservative systems, to the analysis of the so-called boundary Riemann problem. 

The theory of systems of conservation laws poses several challenges: in general classical solutions of Cauchy problems starting out from smooth initial data develop discontinuities and break down 
in finite time. Thus, one needs to look for distributional solutions. However, distributional solutions are not unique and therefore, various admissibility criteria, often motivated by physical considerations, have been introduced in the attempt at singling out a unique solution. We refer the reader to Dafermos~\cite{D}
for an extended discussion on admissibility criteria. The Cauchy problem obtained by coupling~\eqref{e:cl} with the initial data $U(0, x) = U_0(x)$ has been studied extensively. Existence and uniqueness results for global in time, admissible distributional solutions have been established via the random choice method of Glimm~\cite{Glimm} and  the front tracking algorithm, see Bressan et al~\cite{Bre:book} and Holden and Risebro~\cite{HoldenRisebro}. These results hold true under the assumption that the total variation $\mathrm{TotVar} \, U_0$ of the initial data is sufficiently small. If this condition is violated, then the oscillation or total variation of the solution  may  blow up in finite time. Existence and uniqueness results for data with large total variation have been achieved by imposing additional conditions on the structure of the flux function $F$. In the present paper, we only require that the flux $F$ satisfies the strict hyperbolicity condition~\eqref{sh} and hence, we focus on data of sufficiently small total variation.

In view of applications coming from physics, it would be natural to construct solutions of~\eqref{e:cl} by taking the limit $\varepsilon \to 0^+$ of the viscous approximation
\be
\label{e:va}
                   \partial_t Z^{\varepsilon} + 
            \partial_x \big[ F(Z^{\varepsilon}) \big] =
            \varepsilon \partial_x \big[ B(Z^{\varepsilon}) 
            \partial_x Z^{\varepsilon}  \big]. 
\ee 
Here, $B(Z)$ is an $n \times n$ matrix which depends on the physical model under consideration. Convergence results have been established in specific cases, see Dafermos~\cite{D} and the references therein. In particular, Bianchini and Bressan~\cite{BiaBrevv} proved convergence in the case when $B \equiv I$, while the proof of the convergence in the general physical case is still a challenging open problem. 

A key tool in the analysis of the system of conservation laws~\eqref{e:cl} is the study of self-similar solutions in the form $U(t, x)= \Q(x/t)$. Indeed, the aforementioned random choice and  front-tracking schemes are implemented by constructing approximate solutions that locally have the structure of a self-similar function. Also, the analysis of self-similar solutions provides information on both the local (in space-time) and the long-time behavior of the solution of a general Cauchy problem. Self-similar solutions and the so-called Riemann problem have been extensively studied, see in particular the works by Lax~\cite{lax}, Liu~\cite{Liu:rie, Liu:adm}, Tzavaras~\cite{Tz} and Bianchini~\cite{Bia:riemann} in the context of data of small total variation. 

To investigate the structure of self-similar solutions, Dafermos~\cite{D1}, Kalasnikov~\cite{kal} and Tupciev~\cite{Tu1966} independently introduced the self-similar viscous approximation 
\be
\label{e:ssva}
            \partial_t U^{\varepsilon} + 
            \partial_x \big[ F(U^{\varepsilon}) \big] =
            \varepsilon \,  t \, \partial_x \big[ B(U^{\varepsilon}) 
            \partial_x U^{\varepsilon}  \big]. 
\ee 
Indeed, because of the $``t"$ factor in front of the second-order term, the viscous approximation~\eqref{e:ssva} admits self-similar solutions $U^{\varepsilon} (t, x) = Q^{\varepsilon} (x/t)$ which satisfy the ordinary differential equation
\be
\label{e:visode}
              \e \frac{d }{ d \xi } \Big[ B (Q^\e) \frac{d Q^\e }{ d \xi } \Big] =   \frac{d }{d \xi} \big[ F(Q^\e) \big] - \xi \frac{d Q^\e}{ d \xi}\;.
\ee
Convergence results for~\eqref{e:visode} have been established, under suitable conditions imposed on the matrix $B$, by various authors, see in particular Tzavaras~\cite{Tzavaras:JDE,Tz}, Andreianov~\cite{Andreianov}, Joseph and LeFloch~\cite{JoLeF:nc} and the analysis and the references in Dafermos~\cite[Section 9.8]{D}.

In this paper, we consider the initial-boundary value problem obtained by focusing on the domain $(t, x) \in [0, + \infty [ \times [0, + \infty[$. The initial-boundary value problem poses all the challenges of the Cauchy problem that have been mentioned (breakdown of classical solutions, non uniqueness of distributional solutions), as well as additional difficulties due to the presence of the boundary. For instance, consider the problem obtained by coupling the viscous approximation~\eqref{e:va} with the Cauchy and Dirichlet data 
\be
\label{e:cdd}
          Z^{\varepsilon} (0, x) = U_0  \qquad Z^{\varepsilon} (t, 0) = U_b, 
\ee
where $U_0$ and $U_b$ are given constant states in $\R^n$. Since we are interested in small total variation solutions, we focus on the case when $|U_0 - U_b|$ is sufficiently small. If the matrix $B$ is singular (which is the case for most of the physically relevant systems), then problem~\eqref{e:va},~\eqref{e:cdd} may be ill-posed, namely admit no solutions. However, one can obtain a well-posed problem, at the price of higher technicalities, by relying on a more complicated formulation of the boundary condition (see Bianchini and Spinolo~\cite{BiaSpi}). To simplify the exposition, in the following we assume that problem~\eqref{e:va},~\eqref{e:cdd} is well-posed, but our considerations can be extended to the more general case of a suitable class of singular matrices considered in~\cite{BiaSpi}.

We consider the family of initial-boundary value problems obtained by coupling~\eqref{e:va} and~\eqref{e:cdd} and we assume that $Z^{\varepsilon}$ converge as $\varepsilon \to 0^+$ to a limit $U$ in  a suitable topology. Having data~\eqref{e:cdd}, one expects that the solution $U$ admits the self similar form $U(t, x) = \Q(x/t)$ for some function $\Q$ and, hence, one recovers in the limit a solution to the so-called \emph{boundary Riemann problem}. Moreover, because of the assumption that $|U_0 - U_b|$ is sufficiently small, one expects that the trace $\bar U\doteq\lim_{\xi \to 0^+} \Q(\xi)$ is well defined. It should be mentioned that such results have been established under additional assumptions, see for instance the analysis in Gisclon~\cite{Gis} and in Ancona and Bianchini~\cite{AnBia}. We emphasize that, in general, one has 
$$
    \bar U\doteq\lim_{x \to 0^+} U(t, x) = \lim_{\xi \to 0^+} \Q(\xi) \neq U_b 
$$
and, also, the limit $U$ of~\eqref{e:va},~\eqref{e:cdd} varies if the matrix $B$ varies, and we refer to the articles of Gisclon and Serre~\cite{Gis, GisSerre} for these observations. 

To investigate the relation between the boundary data $U_b$ and the so-called \emph{residual boundary condition} $\bar U\doteq\lim_{\xi \to 0^+} \Q(\xi)$, we focus on the \emph{non characteristic case}, namely we assume that, for every $U\in\R^n$, all the eigenvalues of the Jacobian matrix $DF(U)$ are bounded away from zero: 
\be 
\label{e:i:sesp}
         \lambda_1 (U) < \dots < \lambda_k (U) < -c < 0 < c < \lambda_{k+1 } (U) < \dots < \lambda_n (U)
\ee
for some integer $1\le k \leq n-1$ and for a suitable constant $c >0$.  Then, one expects (see again~\cite{AnBia, Gis} for rigorous results in specific cases) that there exists a function $ V$, the so-called boundary layer, which solves the following system: 
\be
\label{e:bl}
\left\{
\begin{array}{ll}
          B(V) V'= F(V)- F(\bar U) \\
            V(0) = U_b \qquad \lim_{y \to + \infty} V(y) = \bar U\;, 
\end{array}
\right.
\ee
where $ V'$ denotes the first derivative of the function $ V$. 

Regarding the self-similar viscous approximation of a boundary Riemann problem obtained by coupling equation~\eqref{e:ssva} with the data
\be
\label{e:cdd1}
       U^{\varepsilon} (0, x) = U_0  \qquad  U^{\varepsilon} (t, 0) = U_b, 
\ee
Joseph and LeFloch established compactness and convergence results in the case when $B$ is the identity matrix in~\cite{JoLeF:sv} or close to the identity in~\cite{JoLeF:nc}.  Moreover, they described the self-similar limit $U(t, x) = \Q(x/t)$ and showed that $\Q$ has bounded total variation. Hence, in particular, the trace $\lim_{\xi \to 0^+} \Q(\xi)$, which we denote by $\bar U$ again, is well-defined. Furthermore, they investigated the boundary layer and proved in Theorem $4.2$ of~\cite{JoLeF:sv} that there exists a boundary layer $V$ satisfying~\eqref{e:bl}. Their analysis of the boundary layer involves delicate manipulations of the equations that rely on the conservative form of~\eqref{e:ssva}.

In this paper, we provide a different approach to the analysis of the boundary layers of the self-similar viscous approximation~\eqref{e:ssva},~\eqref{e:cdd1}. For simplicity, we focus on the case when $B \equiv I$, but our analysis can be extended to  the more general case considered in~\cite{BiaSpi}. The main differences of our analysis from the analysis in~\cite{JoLeF:sv} are the following: first, we rely on completely different techniques since here we use center-stable manifold tools coming from the area of dynamical systems in the spirit of Bianchini and Bressan~\cite{BiaBrevv}. Our approach was inspired by the work of Dafermos~\cite[Section 9.8]{D} concerning the limit of the self-similar viscous approximation for the Riemann problem on the whole real line. However, the presence of the boundary amounts for additional challenges which are tackled by employing the Center-Stable Manifold Theorem. As a byproduct, our results apply directly to the analysis of non conservative systems, namely we handle the limit of the self-similar viscous approximation
\be
\label{e:nc}
          \partial_t U^{\varepsilon} + 
            A(U^{\varepsilon}) 
            \partial_x U^{\varepsilon}  =
            \varepsilon \,  t \, \partial_{xx} U^{\varepsilon}  
\ee
in the case when the $n \times n$ matrix $A (U)$ does not necessarily coincide with the Jacobian matrix $DF (U)$ of some flux function $F: \R^n \to \R^n$.  On the other side, the limit analysis in~\cite{JoLeF:sv} applies to both the non characteristic and (under genuine nonlinearity assumptions) to the boundary characteristic case, while here we restrict to the non characteristic case.

It should be noted that in the non conservative case the distributional solutions of the quasilinear hyperbolic system
$$
    \partial_t U + 
            A(U) 
            \partial_x U  = 0
$$ 
are not defined and we refer to Dal Maso, LeFloch and Murat~\cite{DalMasoLeFlochMurat} for possible definitions of weak solutions. We also quote Le Floch and Tzavaras~\cite{LeflochTzavaras} and the numerous references therein for the analysis of the Cauchy problems associated to the non conservative systems obtained by taking the limit $\varepsilon \to 0^+$ of~\eqref{e:nc}.
 
Before stating our main results, we introduce some notations. We consider system 
\be
\label{e:blnc}
\left\{
\begin{array}{ll}
             V'' = A(V) V' \\
             V(0) = U_b \qquad \lim_{y \to + \infty}  V(y) = \bar U 
\end{array}
\right.
\ee
and note that systems~\eqref{e:bl} and~\eqref{e:blnc} coincide in the case when $B \equiv I$ and $A( V) = DF( V)$. Next, we write the ordinary differential equation at the first line of~\eqref{e:blnc} as a first order system by setting
\be
\label{e:i:1o}
    \left\{
\begin{array}{ll}
           V' = W \\
            W' = A(V) W 
  \end{array}
\right.
\ee
and, by relying on the non characteristic condition~\eqref{e:i:sesp} and  the Stable Manifold Theorem (see e.g. Perko~\cite[Section 2.7]{Perko}), we conclude that there is a $k$-dimensional stable manifold ${\mathcal{M}^s (\bar U) \subseteq \R^n \times \R^n}$ which is invariant under~\eqref{e:i:1o} and consists of the data $(V_0, W_0)$ such that the solution of~\eqref{e:i:1o} satisfying $V(0) = V_0$ and $ W(0) = W_0$ has the following asymptotic behavior: 
$$
    \lim_{y \to + \infty}  | V(y) - \bar U|   = 0, \qquad   \lim_{y \to + \infty}  | W(y)| e^{cy/4} =0, 
$$
where $c$ is the same constant as in~\eqref{e:i:sesp}.
We then define the projection $\mathcal{M}^s_V (\bar U) \subseteq \R^n$ of $\mathcal{M}^s(\bar U)$ onto the first component as follows
\be
\label{e:stable}
   \mathcal{M}^s_V (\bar U) = \big\{  V \, \textrm{such that $(V,  W) \in \mathcal{M}^s (\bar U)$ for some $ W$} \big\} \;.
\ee
Now, we state the main result of the paper.

\begin{thm}
\label{t:main} Let condition \eqref{e:i:sesp} hold for some constant $c>0$ and a natural number $k$, $1\leq k \leq n-1$. Given a state $U_0\in\R^n$, there is a sufficiently small constant $\delta >0$ and a constant $\beta>0$ such that, for any 
$U_b$ and $\bar U$ satisfying $|U_b - U_0| \leq \beta \delta$ and $|U_b - \bar U| \leq \beta \delta$, we have that $U_b \in \mathcal M^s_V (\bar U)$ if and only if the following two properties hold. 
\begin{enumerate}
\item For every sequence $\{\bar U^\e \} \subseteq \R^n$ such that $\bar U^\e \to \bar U$ as $\e \to 0^+$, there exists a sequence of solutions $\Q^\e: [0, \delta ] \to \R^n$ to the problem
       \be
       \label{e:t:app}
        \left\{
        \begin{array}{lll}
               \displaystyle{  
               \e \frac{d^2 Q^\e}{ d \xi^2 } 
               =    \big[ A(\Q^\e) - \xi I \big]  \frac{d \Q^\e}{ d \xi}  }\\
                     \vspace{.2cm}
                \Q^\e (0)  = U_b \\           
                \Q^\e (\delta) = \bar U^\e \; ,   \\
        \end{array}
        \right.    
        \ee
        where $I$ denotes the $n \times n$ identity matrix. 
\item By setting $\Ve (\zz): = Q^\e ( \e\zz )$, the family $\{\Ve\}$ converges to a limit function 
$V^0: [0, + \infty[ \to \R^n$, as $\e \to 0^+$, uniformly on compact sets. The function $V^0$ solves the problem 
        \be
        \label{e:i:c2}
        \left\{
        \begin{array}{lll}
         \vspace{.2cm}
                 \dfrac{d^2 V^0}{d\zz^2}  =  A(V^0) \dfrac{dV^0}{d\zz} \\
                 \vspace{.2cm}
                V^0 (0) = U_b  \phantom{ \dfrac{d V^0}{d\zz} } \\  
                \vspace{.2cm}
                \displaystyle\lim_{\zz \to + \infty} V^0(\zz) = \bar U \phantom{ \dfrac{d V^0}{d\zz} }
                \\
                \displaystyle\lim_{\zz \to + \infty} \left|  \dfrac{d V^0}{d\zz} \right| e^{c \zz/4} =0.
        \end{array}
        \right.    
        \ee        
\end{enumerate}
\end{thm}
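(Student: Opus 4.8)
The plan is to prove both implications by reformulating the problem in terms of the first-order system~\eqref{e:i:1o} and a suitable center-stable manifold for a parametrized family of ODEs obtained from~\eqref{e:t:app} after the rescaling $\zeta = \xi/\e$. The starting point is to observe that if we set $\Ve(\zz) = \Q^\e(\e\zz)$, then $\Ve$ solves
\be
\label{e:pp:resc}
       \frac{d^2 \Ve}{d\zz^2} = \big[ A(\Ve) - \e \zz\, I \big] \frac{d\Ve}{d\zz}, \qquad \Ve(0) = U_b, \quad \Ve(\delta/\e) = \bar U^\e,
\ee
so that the limit equation at the first line of~\eqref{e:i:c2} is the formal $\e \to 0^+$ limit of~\eqref{e:pp:resc}. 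Writing $W^\e = d\Ve/d\zz$ and $S^\e = \e\zz$ (so $dS^\e/d\zz = \e$) turns~\eqref{e:pp:resc} into an autonomous first-order system in $(\Ve, W^\e, S^\e) \in \R^n \times \R^n \times \R$ whose linearization at $(\bar U, 0, 0)$ has $n$ eigenvalues equal to the eigenvalues of $A(\bar U)$ — which by~\eqref{e:i:sesp} split into $k$ with real part $< -c$ and $n-k$ with real part $> c$ — together with $n+1$ eigenvalues equal to zero (the $W$-direction contributes $0$ with nontrivial Jordan structure, and the $S$-direction contributes $0$). This places us in the setting of the Center-Stable Manifold Theorem: there is a locally invariant $(2n+1-k+k) = $ — more precisely, a center-stable manifold $\wcse$ of dimension $n+1 + k$ (the $n+1$ center directions from $W$ and $S$ plus the $k$ stable directions) near $(\bar U, 0, 0)$, depending smoothly on the base point $\bar U$, containing all orbits that do not blow up in the unstable directions.

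The first main step is to establish the implication $U_b \in \mathcal M^s_V(\bar U) \Rightarrow (1),(2)$. Assuming $U_b \in \mathcal M^s_V(\bar U)$, pick $W_b$ with $(U_b, W_b) \in \mathcal M^s(\bar U)$; the corresponding solution $V^0$ of~\eqref{e:i:1o} then satisfies exactly the four conditions in~\eqref{e:i:c2}, so it suffices to produce, for each sequence $\bar U^\e \to \bar U$, solutions $\Q^\e$ of~\eqref{e:t:app} whose rescalings converge to this $V^0$. I would do this by a shooting/fixed-point argument: the point $(U_b, W_b, 0)$ lies on (or $O(\delta)$-close to) the center-stable manifold $\wcse$ of the autonomous $(\Ve, W^\e, S^\e)$-system, and one shows that the forward orbit through a nearby point of $\wcse$ stays in a small neighborhood of $(\bar U, 0, 0)$ on the relevant $\zz$-interval $[0, \delta/\e]$ and reaches the slice $\{S^\e = \delta\}$; tuning the $k$ stable coordinates of the initial datum (with $U_b$ held fixed and $W^\e(0)$ determined by the manifold) so that the first component at $\zz = \delta/\e$ equals $\bar U^\e$ gives a solution of the boundary-value problem~\eqref{e:t:app} by the implicit function theorem, using the contraction along the stable directions and the smallness of $\delta$ to guarantee invertibility. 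Continuous dependence on $\e$ of the center-stable manifold and of the flow then yields the uniform-on-compact-sets convergence $\Ve \to V^0$ in~(2), since at $\e = 0$ the $S$-equation becomes $dS/d\zz = 0$ and the system restricted to $\wcs^0$ reduces to~\eqref{e:i:1o} on $\mathcal M^s(\bar U)$.

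The converse, $(1),(2) \Rightarrow U_b \in \mathcal M^s_V(\bar U)$, is where the real work lies. Given the families $\Q^\e$ and the limit $V^0$ as in (1)--(2), the third and fourth conditions in~\eqref{e:i:c2} say precisely that $(V^0(0), (dV^0/d\zz)(0)) = (U_b, W^0(0))$ lies on $\mathcal M^s(\bar U)$, hence $U_b \in \mathcal M^s_V(\bar U)$ — \emph{provided} one can justify passing to the limit and, crucially, that the limiting decay rate $e^{c\zz/4}$ (rather than something weaker) is inherited. The decay estimate will be obtained not from the convergence alone but from a priori bounds on the rescaled solutions $\Ve$ that are uniform in $\e$: one shows, using the center-stable manifold structure and the fact that $\Ve(\delta/\e) \to \bar U$ with the derivative at the right endpoint controlled (because $\Q^\e$ solves a problem on $[0,\delta]$ where $A(\Q^\e) - \xi I$ is uniformly close to $A(\bar U)$, whose stable/unstable splitting is uniformly hyperbolic away from $S = 0$), that $|W^\e(\zz)| e^{c\zz/4}$ stays bounded uniformly in $\e$ and $\zz \in [0, \delta/\e]$; letting $\e \to 0^+$ transfers this bound to $V^0$. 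The main obstacle, as I see it, is exactly this uniform exponential decay estimate: the operator $A(\Ve) - \e\zz I$ is nonautonomous and its zero-order term $-\e\zz I$, although small pointwise, accumulates over the long interval $[0, \delta/\e]$, so one must show this drift does not destroy the $k$-dimensional contraction — essentially a uniform center-stable-manifold / exponential-dichotomy estimate for the slowly-varying family, which I expect to handle by a Gronwall-type argument on the stable and unstable components separately, exploiting that $\e\zz \le \delta$ throughout and choosing $\delta$ small relative to $c$. Once that estimate is in place, identifying the limit point with a point of the stable manifold $\mathcal M^s(\bar U)$ and concluding $U_b \in \mathcal M^s_V(\bar U)$ is a direct application of the characterization of $\mathcal M^s(\bar U)$ recalled before the statement.
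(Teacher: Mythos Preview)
You have the difficulty of the two implications reversed. The direction $(1),(2) \Rightarrow U_b \in \mathcal M^s_V(\bar U)$ is \emph{not} where the real work lies; it is immediate. Hypothesis~(2) hands you a function $V^0$ that already satisfies all four lines of~\eqref{e:i:c2}, including the exponential decay $|dV^0/d\zz|\,e^{c\zz/4}\to 0$. There is nothing to ``inherit'' or to pass to the limit: by the Stable Manifold Theorem applied to~\eqref{e:i:1o}, the decay condition is exactly the characterization of $(U_b, (V^0)'(0)) \in \mathcal M^s(\bar U)$, hence $U_b \in \mathcal M^s_V(\bar U)$. The elaborate uniform-in-$\e$ decay estimate you outline for this direction is solving a problem that is not there.

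The genuine work is in the forward direction $U_b \in \mathcal M^s_V(\bar U) \Rightarrow (1),(2)$, which you treat rather briefly. Your outline there---set up an autonomous first-order system, invoke a center-stable manifold, and run a fixed-point/shooting argument---is in the right spirit and is essentially what the paper does. Two points, however. First, your dimension bookkeeping is off: to make the system autonomous you must also carry $\e$ as a state variable (so the phase space is $\R^{2n+2}$, not $\R^{2n+1}$), and the center directions come from the $V$-block and from $(\xi,\e)$, not from $W$; the center-stable manifold has dimension $n+k+2$. Second, the paper does not shoot from $U_b$ and adjust stable coordinates to hit $\bar U^\e$; instead it fixes $\wcse(0)=\bwcs$ and $\Ve(\delta/\e)=\bar U^\e$ as mixed data, solves for $(\Ve,\wcse)$ by a contraction on the reduced center-stable system, obtains uniform exponential bounds $|\wcse(\zz)|\le c_W\delta\,e^{-c\zz/2}$, and then passes to the limit by Ascoli--Arzel\`a and dominated convergence. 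The identification of the resulting $V^0(0)$'s with $\mathcal M^s_V(\bar U)$ is the final step. So the uniform exponential decay you worry about is indeed the crux, but it belongs to the construction of $\Q^\e$ and $V^0$ in the forward implication, not to the converse.
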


Before we proceed, we make some remarks. First, it is obvious that the solutions $\Q^\e(\xi)$ to the ordinary differential equations~\eqref{e:t:app} yield the self-similar solutions $U^\e(t,x)$ of~\eqref{e:nc} by writing $U^\e(t,x)=\Q^\e(x/t)$. Second, by relying on the Stable Manifold Theorem, it follows immediately that if system ~\eqref{e:i:c2} admits a solution, then $U_b \in \mathcal M^s_V (\bar U)$. However, the opposite direction of the implication in the statement of the theorem is not trivial. In other words, the proof is mainly devoted to showing that from $U_b \in \mathcal M^s_V (\bar U)$, then properties (1) and (2) of the theorem follow. Third, Theorem~\ref{t:main} can be reformulated by stating that the boundary layers generated by the self-similar viscous approximation are the same as the boundary layers~\eqref{e:blnc} generated by the classical vanishing viscosity approximation. Below, we discuss an application of this fact concerning the boundary Riemann problem and the corresponding precise result is stated in Section~\ref{s:cssva} as Proposition~\ref{t:cor}.

Consider the self-similar vanishing viscosity approximation 
\begin{equation}
\label{e:i:ssvva}
\left\{
\begin{array}{lll}
            \partial_t U^\e + A(U^\e) \partial_x U^\e = \e t  \partial^2_{xx} U^\e \\
            U^\e (t, 0) = U_b \\
            U^\e (0, x ) = U_0 \\
\end{array}
\right.
\end{equation} 
and the classical vanishing viscosity approximation
\begin{equation}
\label{e:i:vva}
\left\{
\begin{array}{lll}
            \partial_t Z^\e + A(Z^\e) \partial_x Z^\e = \e \partial^2_{xx} Z^\e \\
            Z^\e (t, 0) = U_b \\
            Z^\e (0, x ) = U_0 \\
\end{array}
\right.
\end{equation} 
of a Riemann problem. Under the assumption that $|U_b - U_0|$ is sufficiently small, convergence results for~\eqref{e:i:ssvva} and~\eqref{e:i:vva} have been established by Joseph and LeFloch~\cite{JoLeF:sv, JoLeF:nc} and by Ancona and Bianchini~\cite{AnBia}, respectively. 
 
By combining the analysis in Dafermos~\cite[Section 9.8]{D} and Theorem~\ref{t:main}, we infer that, under appropriate assumptions (for completeness we give the precise result in Proposition~\ref{t:cor} of Section~\ref{s:comp}), the limits of~\eqref{e:i:ssvva} and~\eqref{e:i:vva} coincide. An analogous result has been proved for conservative systems in Christoforou and Spinolo~\cite{ChristoforouSpinolo} in both the characteristic and the non characteristic boundary case. Here we extend this analysis to non conservative systems but we focus on the non characteristic case. The reason why these results are not \emph{a priori} obvious is that in general, for initial-boudary value problems, the limit of a viscous approximation~\eqref{e:va},~\eqref{e:cdd} depends on the choice of the viscosity matrix $B$ and hence, on the type of viscous approximation. 

The exposition of the paper is organized as follows: in Section~\ref{s:cssva} we compare the limits of the classical vanishing viscosity and self-similar  
viscous approximation of a boundary Riemann problem. More precisely, in Subsection~\ref{s:pre}, we present existing results concerning the Riemann problem established by Dafermos~\cite{D}  for the self-similar viscous approximation and by Bianchini and Bressan~\cite{BiaBrevv} and Ancona and Bianchini~\cite{AnBia}  for the vanishing viscosity approximation. Using these results and applying Theorem~\ref{t:main}, in Subsection~\ref{s:comp} we show that, under reasonable assumptions, the limits of~\eqref{e:i:ssvva} and~\eqref{e:i:vva} coincide. The proof of Theorem~\ref{t:main} is provided in Section~\ref{s:proof}.

\section{On the classical and the self-similar viscous approximation of a boundary Riemann problem}
\label{s:cssva}
The aim of this section is to compare the limits of the self-similar~\eqref{e:i:ssvva} and the classical vanishing 
viscosity~\eqref{e:i:vva} approximation of a boundary Riemann problem. We first quote existing results related to the wave fan curves corresponding to these two different approximations and then we show in Proposition~\ref{t:cor} that the two limits are the same.

\subsection{Preliminary results: construction of the wave fan curves}\label{s:pre} 
In~\cite{lax}, Lax studied self-similar, distributional solutions of the system of conservation laws~\eqref{e:cl} by imposing some technical assumptions on the structure of the flux function $F$. In particular, for every $i=1, \dots, n$, he constructed  the \emph{$i$-wave fan curve} $T^i (s_i, U^+)$ passing through a given state $U^+\in\R^n$ with $s_i\in\R$ being the variable parameterizing the curve. The curve $T^i (s_i, U^+)$ takes values in $\R^n$ as $s_i$ varies in  a neighborhood of $0$ and for fixed $s_i$ small enough, the Riemann problem 
\be
\label{e:rie}
\left\{
\begin{array}{ll}
           \partial_t U + \partial_x \big[ F(U) \big] =0, \\
            U(0, x) =
            \left\{
            \begin{array}{ll}
                  U^+ & x >0 \\
                  T^i (s_i, U^+) & x < 0 \\ 
            \end{array}            
            \right.
\end{array}    
\right.
\ee 
admits a self-similar solution which is either a rarefaction, or a single contact discontinuity or a single shock satisfying the Lax admissibility condition. This condition was introduced in the same paper~\cite{lax}. Lax's construction was later extended to very general cases in the works by Liu~\cite{Liu:rie, Liu:adm}, Tzavaras~\cite{Tz} and Bianchini~\cite{Bia:riemann}. 

Also, Bianchini and Bressan~\cite{BiaBrevv} and Dafermos~\cite[Section 9.8]{D} described the \emph{$i$-wave fan curve} obtained by taking the limit $\varepsilon \to 0^+$ in the classical vanishing viscosity
\be\label{e:clas}
        \partial_t  Z^{\e} + A(Z^\e) \partial_x Z^{\e} = \e \partial_{xx}^2 Z^{\e}\;
\ee
and in the self-similar viscous approximation~\eqref{e:nc}, respectively. We go over the constructions in~\cite{BiaBrevv} and~\cite[Section 9.8]{D} in Subsections~\ref{s:wavefanvan} and~\ref{s:daf},  respectively, while in Subsection~\ref{s:AnBia} we quote the description of the limit of~\eqref{e:i:vva} in the boundary case provided by Ancona and Bianchini~\cite{AnBia}. We note that the constructions in~\cite{BiaBrevv, D, AnBia} include the case of non conservative systems.       
 
\subsubsection{Wave fan curves induced by the classical vanishing viscosity approximation}
\label{s:wavefanvan}
Here, we describe the \emph{$i$-wave fan curve} $T^i (s_i, U^+)$ as constructed in~\cite[Section 14]{BiaBrevv}. 

Given a matrix $A(U)$ satisfying strict hyperbolicity~\eqref{sh}, we denote by $R_1(U)$, $\dots$, $R_n(U)$ a basis of right eigenvectors.  

We first assume $s_i >0$ and then, consider the following fixed problem: 
\be \label{e:biabre}
        \left\{
	\begin{array}{ll}
	V_i (\tau)  &  = \displaystyle U^+ + \int_0^{\tau} \hat R_i \big(V_i(s) , \, \omega_i (s) , \xi_i(s)  \big)  \, ds  \\ \\
	\omega_i (\tau)    & = \hat{f}(\tau) - \hat{g}(\tau) \\ \\
	\xi_i (\tau)  & = \displaystyle \frac{d \hat{g} }{d \tau}  (\tau), \\ \\
	\end{array}
	\right. 
\ee
where we used the notation 
$$
    \hat  f(\tau) \doteq  \int_0^{\tau} \hat \lambda_i \big(V_i(s) , \, \omega_i (s) , \xi_i(s) \big)  \, ds,
$$
while $\hat g$ denotes the concave envelope of $\hat f$ on the interval $[0, \, s_i]$. Also, the so-called {generalized eigenvectors} $\hat R_i$ and \emph{generalized eigenvalues} $\hat \lambda_i$ are defined in~\cite[Section 14]{BiaBrevv} by considering the travelling waves $V$ to~\eqref{e:clas}, namely solutions of the system
\be \label{e:biabre2}
         \left\{
	\begin{array}{ll}
	V'& = W \\ 
	W' & = \Big[ A(V) - \xi I \Big] W \\ 
	\xi' & = 0. \\ 
	\end{array}
	\right. 
\ee
By restricting system~\eqref{e:biabre2} on a center manifold about the equilibrum point $V = U^+$, $W = \vec 0$, $\xi = \lambda_i (U^+) $, one eventually determines the system 
\be \label{e:biabre3}
         \left\{
	\begin{array}{ll}
	V'& =  \hat R_i (V, \, \omega_i, \xi) \omega_i  \phantom{\Big(}  \\
	\omega_i ' & = \big( \hat \lambda_i (V, \, \omega_i, \xi)  - \xi \big) \omega_i  \phantom{\Big(} \\ 
	\xi' & =0, \phantom{\Big(}  \\
	\end{array}
	\right. 
\ee 
where $\omega_i$ is a scalar unknown and the generalized eigenvector $\hat R_i$ is a suitable function taking values in $\R^n$ and satisfying $ \hat{R}_i (U^+, \, 0, \lambda_i (U^+)) = R_i (U^+)$. 

By relying on the Contraction Map Theorem, one can show that problem~\eqref{e:biabre} has a unique solution $(V_i, \omega_i, \xi_i)$ belonging to a suitable metric space and then set $T^i (s_i, U^+) = V_i (s_i)$. The construction in the case when $s_i <0$ follows similarly by taking $\hat{g}$ in~\eqref{e:biabre} to be the convex envelope of $\hat{f}$ instead of the concave one.

\subsubsection{Vanishing viscosity limit of a boundary Riemann problem}
\label{s:AnBia}
In~\cite{AnBia}, Ancona and Bianchini showed that, if $|U_0- U_b|$ is sufficiently small and the non characteristic condition~\eqref{e:i:sesp} holds, then the limit $\varepsilon \to 0^+$ of the vanishing viscosity approximation~\eqref{e:i:vva} is determined by imposing the condition
\begin{equation}
\label{e:fpt2}
         U_b = \mathcal G_{U_0} ( S, s_{k+1}, \dots s_n) \doteq  \phi_s \Big( S, T^{k+1} \big( s_{k+1}, \dots, T^n (s_n, U_0)  \dots \big) \Big)\;.
\end{equation}
Here, $ S \in \R^k$ and $\phi_s$ is a map parameterizing the stable manifold of~\eqref{e:i:1o} about the equilibrium point $(\bar U, \vec 0)$, i.e. the manifold $  \mathcal M^s_V (\bar U)$ in~\eqref{e:stable} can be expressed as follows:
$$
    \mathcal M^s_V (\bar U) = \{ \phi_s ( S, \bar U)\, \textrm{ for some vector 
    $ S \in \R^k$} \}. 
$$
Also, $T^i(s_i, \cdot)$ is the \emph{curve of admissible states} defined by Bianchini and Bressan in~\cite{BiaBrevv} whose construction is sketched in Subsection~\ref{s:wavefanvan}.

It can be shown that~\eqref{e:fpt2} uniquely determines the values of $S$, $s_{k+1}, \dots, s_n$ and that the solution $Z(t,x)$ obtained as the $\e\to 0^+$ limit of the classical vanishing viscosity approximation~\eqref{e:i:ssvva} admits the following representation: 
\begin{equation}
\label{e:z}
         Z(t, x) = 
          \left\{
    \begin{array}{ll}
    		     \bar U       &         \text{if}\quad  x/t < \xi_{k+1} (s_{k+1})   \\
		   V_j (\tau)   &         \text{if}\quad  x/t= \xi_j (\tau),\quad \text{for}\; \tau\in]0,s_j[\;\text{ or } ]s_j,0[,\; j=k+1,\dots,n       \\
               V_j (s_j)     &         \text{if}\quad    \xi_{j} (0) < x / t < \xi_{j+1}(s_{j+1}) \quad \text{for}  \; j=k+1,\dots,n-1 \\     
               U_0           &          \text{if}\quad   x/t > \xi_n (0),                \\
    \end{array}
    \right.
\end{equation} 
where $(s_{k+1}, \dots, s_n)$ are determined by using~\eqref{e:fpt2}, $V_j$ and $\xi_j$ are the solutions of the fixed point problem~\eqref{e:biabre} and the trace $\bar U$ is given by
$$
\bar U =T^{k+1} \big( s_{k+1}, \dots ,T^n (s_n, U_0)  \dots \big)\;.
$$

\subsubsection{Wave fan curves induced by the self-similar viscous approximation}
\label{s:daf} 
Now, we present the construction of the wave fan curves induced by the self-similar viscous approximation as was constructed by Dafermos in~\cite[Section 9.8]{D}

To construct the $i$-wave fan curve $\phi_i(s_i,  U^+)$  that emanates from some state $U^+$, we consider the system
\be
\label{e:visode2}
            \displaystyle{  
               \e \frac{d^2 Q^\e}{ d \xi^2 } 
               =    \big[ A(Q^\e) -\xi I \big]  \frac{d Q^\e}{ d \xi}  }
\ee
satisfied by the self-similar solutions and we decompose the first derivative along the right eigenvectors of $A(U)$:
\be \label{e:dec}
\frac{dQ^\e}{d\xi} =\sum_{j=1}^n a_j(\xi) R_j(Q^\e(\xi)).
\ee
By taking a dual basis of left eigenvectors $L_1(U), \dots, L_n(U)$, the component $a_j$ is given by $a_j = \langle L_j, Q^\e \rangle$, $j=1,\dots,n$, where $\langle \cdot, \cdot \rangle$ denotes the standard scalar product.

Substituting~\eqref{e:dec} into~\eqref{e:visode}, we arrive at
$$
     \e \frac{d{a_j}}{d\xi}   =  [\lambda_j(V)-\xi] a_j + \e  \sum_{h,l=1}^n \beta_{jhl} a_h \, a_l, 
$$
where the coefficients $\beta_{jhl}$ are given by
\be
\label{e:beta}
     \beta_{jhl} (Q^\e) = -  \langle D L_j (Q^\e) R_h (Q^\e) , R_l (Q^\e) \rangle 
\ee
and $D L_j$ denotes the Jacobian matrix of the field $L_j$. 
We now perform a change of variables by setting $\xi=\e\zeta$  and rescale the components $a_j$ by setting  $\omega_j : = \e a_j$ for every $j=1, \dots n$. The self-similar viscous approximation $Q^\e$ to ~\eqref{e:visode2} is denoted by $V^\e$ in the new variable $\zz$, i.e. $V^\e(\zz)\doteq Q^\e(\e\zz)$. However, for the convenience of the reader, we abuse the notation, for now, and we drop the index $\e$ from $V^\e$ since the $\e$ dependence is clear in what follows. In Subsection~\ref{sss:app}, we will return to the original notation $V^\e$. Using decomposition~\eqref{e:dec}, we rewrite system~\eqref{e:visode2} as an autonomous first order system 
\be
	\label{S3: autonomous ODE}
	\left\{
	\begin{array}{ll}
	V'& = \displaystyle\sum_{j=1}^n \omega_j R_j(V) \\ 
	\omega_j' & =[\lambda_j(V)-\xi]\omega_j+\displaystyle\sum_{h,l=1}^n\beta_{jhl} \omega_h \,\omega_l    \quad 
	\textrm{for every $j = 1, \dots, n$} \\ 
	\xi' & =\e\\ 
	\e' & = 0\;, \phantom{\displaystyle \sum_{i=1}^n}
	\end{array}
	\right. 
\ee
where $'$ denotes differentiation with respect to $\zeta$. By linearizing~\eqref{S3: autonomous ODE} about the equilibium point 
$$
    {V = U^+,\,\,\, \omega_1 =0, \dots ,\,\omega_n =0,\,\,\, \xi =\lambda_i( U^+) ,\,\,\,\e =0}, 
$$ we get the linear system
\be \label{e:cen:lin}
         \left\{
	\begin{array}{ll}
	V'& = \displaystyle\sum_{j=1}^n \omega_j R_j( U^+) \\ 
	\omega_j' & =[\lambda_j(U^+)- \lambda_i (U^+)]    \omega_j \quad 
	\textrm{for every $j = 1, \dots, n$} \phantom{ \displaystyle{\sum}} \\ 
	\xi' & =\e \phantom{ \displaystyle{\sum_{j=1}^n}}\\ 
	\e' & = 0 \;. \phantom{ \displaystyle{\sum}}  
	\end{array}
	\right. 
\ee
The center subspace $\mathcal N_i$ of system~\eqref{e:cen:lin} consists of the vectors $(V,\omega,\xi, \e)\in\mathbb{R}^{2n+2}$, with $\omega_j=0$ for $j\ne i$, thus,  $\mathcal N_i$ has dimension $n+3$. By the Center Manifold Theorem, there is an $(n+3)$-dimensional manifold containing all the solutions of~\eqref{S3: autonomous ODE} that sojourn in a small enough neighbourhood of the equilibrium point $(U^+, \vec 0, \lambda_i (U^+), 0)$. Such a center manifold is parameterized by $\mathcal N_i$ and it is tangent to the center subspace at the equilibrium. 
In general, the center manifold of~\eqref{S3: autonomous ODE} about $(U^+, \vec 0, \lambda_i (U^+), 0)$ is not unique and therefore, in the following, we fix one and we denote it by $\mathcal M_i$.  We refer the reader to the notes by Bressan in~\cite{Bre:notes} for an extended discussion about the Center Manifold Theorem.

As it is shown in~\cite[Section 9.8]{D}, system~\eqref{S3: autonomous ODE} on $\mathcal M_i$ is equivalent to  
\be \label{e:cen:red}
         \left\{
	\begin{array}{ll}
	V'& =   R_i^\sharp (V, \, \omega_i, \xi, \e) \omega_i  \phantom{\Big(}  \\
	\omega_i ' & =\big(  \lambda^\sharp _i (V, \, \omega_i, \xi, \e) - \xi \big)  \omega_i  \phantom{\Big(} \\ 
	\xi' & =\e \phantom{\Big(}  \\
	\e' & = 0\;,\phantom{\Big(} \\
	\end{array}
	\right. 
\ee 
where the functions $ R_i^\sharp $ and $ \lambda_i^\sharp $ take values in $\R^n$ and $\R$, respectively,
and satisfy
\be \label{e:con:ri}
       R_i^\sharp  (U^+, \, 0, \lambda_i(U^+), 0) = R_i (U^+) \quad \textrm{and} \quad  \lambda_i^\sharp  (U^+, \, 0, \lambda_i(U^+), 0) = \lambda_i (U^+).  
\ee     
It should be noted that system~\eqref{e:cen:red} consists of $(n+3)$ equations.

To define the value $\phi_i (s_i, U^+)$ attained by the $i$-wave fan curve of admissible states emanating from $U^+$ at $s=s_i >0$, we consider  the fixed point problem
\be \label{e:cen:fi}
        \left\{
	\begin{array}{ll}
	V_i (\tau)  &  = \displaystyle U^+ + \int_0^{\tau}  R_i^\sharp  \big(V_i(s) , \, \omega_i (s) , \xi_i(s), 0 \big)  \, ds  \\ 
	\omega_i (\tau)    & = f^\sharp (\tau) - g^\sharp (\tau) \\
	\xi_i (\tau)  & = \displaystyle \frac{d g^\sharp  }{d \tau}  (\tau) \\ 
	\end{array}
	\right. 
\ee
over the interval $[0, s_i]$. Here,
$$
    f^\sharp  (\tau) \doteq  \int_0^{\tau}  \lambda_i^\sharp  \big(V_i(s) , \, \omega_i (s) , \xi_i(s), 0 \big)  \, ds,
$$
while $g^\sharp $ denotes the concave envelope of $f^\sharp $ on the interval $[0, \, s_i]$. By relying on a fixed point argument, it can be shown that~\eqref{e:cen:fi} admits a unique solution $\big(V_i(\tau), \omega_i(\tau), \xi_i(\tau) \big)$ over $[0, s_i]$. Then, the $i$--wave fan curve of admissible states is defined by setting  $\phi_i({s_i}, U^+):= V_i(s_i)$. 

If $s_i <0$, the construction is similar, except that the function $g$ in~\eqref{e:cen:fi} is the \emph{convex} envelope of $f$ over the interval $[s_i, 0]$. Moreover, it can be shown  that, by construction, the curve $\phi_i (s_i, U^+)$ is tangent  to the eigenvector $R_i(U^+)$ at $s_i =0$.          

\subsection{Comparison between the self-similar and the classical vanishing viscosity approximation}
\label{s:comp}

Here, we prove that under appropriate conditions on the limit obtained by the self-similar viscous approximation~\eqref{e:i:ssvva}, the solution to the boundary Riemann problem obtained via the classical vanishing viscosity approximation~ \eqref{e:i:vva} coincides with the one obtained via the self-similar viscous approximation~\eqref{e:i:ssvva}. This is not a priori obvious since it is known that, in general, for initial-boundary value problems the limit depends on the type of viscous approximation.

The comparison between the two limits is established using Theorem~\ref{t:main} and the construction of the wave fan curves described in Subsection~\ref{s:pre}. The following proposition states the result:

\begin{prop}
\label{t:cor}
         Let $U_0\in\R^n$ and assume that condition~\eqref{e:i:sesp} holds for an appropriate constant $c>0$ and for some natural number $k$, $1\le k\le n-1$.  Then, there exists a sufficiently small constant $\delta >0$ such that given $U_b$ with $|U_0 -  U_b| \leq \delta$, the family of self-similar viscous approximations $U^{\e}$ satisfying~\eqref{e:i:ssvva} converges, up to subsequences, to a self-similar limit function $U(t, x)=\Q (x/t)$. Assume that the following two conditions are both satisfied:
                  \begin{enumerate}
                  \item[(a)] let $\bar U$ denote the trace
                  $$
                      \bar U \dot{=} \lim_{\xi \to 0^+} \Q(\xi),
                  $$
                  then $\bar U$ satisfies conditions (1) and (2) in the statement of Theorem~\ref{t:main}.
                 \item[(b)] The function $U$ admits the following representation:
                 \begin{equation}
\label{e:u}
         U(t,x) = 
          \left\{
    \begin{array}{ll}
    		     \bar U       &         \text{if $ x/t < \xi_{k+1} (s_{k+1})$}   \\
		   V_j (\tau)   &         \text{if $x/t = \xi_j (\tau)$}    \quad \text{for}\; \tau\in]0,s_j[\;\text{ or } ]s_j,0[,\; j=k+1,\dots,n   \\
               V_j (s_j)     &         \text{if $ \xi_{j } (0) < x/t < \xi_{j+1}(s_{j+1}) $} \quad \text{for}  \; j=k+1,\dots,n-1 \\     
               U_0           &           \text{if  $x/t > \xi_n (0),$}                \\
    \end{array}
    \right.
\end{equation}
for suitable small values $s_{k+1}, \dots, s_n\in\R$, where the functions $V_j$ and $\xi_j$ are solutions of the fixed point problem~\eqref{e:cen:fi}.
         \end{enumerate}
         Then the limit $U(t,x)$  of the self-similar viscous approximation $U^\e$ defined by~\eqref{e:i:ssvva} coincides with the unique limit $Z(t,x)$ of the classical vanishing viscosity approximation $Z^\e$ defined by~\eqref{e:i:vva}.
          \end{prop}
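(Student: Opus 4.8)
The plan is to show that the two self-similar limits $U(t,x)=\Q(x/t)$ and $Z(t,x)$ have not only the same combinatorial structure --- a boundary layer adjacent to $x/t=0^+$ on which the solution equals the trace $\bar U$, followed by the $n-k$ admissible waves of the families $k+1,\dots,n$ joining $\bar U$ to $U_0$ --- but also the same finitely many real parameters determining it, so that they must coincide pointwise. The comparison rests on two ingredients. The first is that the wave fan curves $\phi_i(\cdot,\cdot)$ of Subsection~\ref{s:daf}, induced by the self-similar viscous approximation, coincide with the curves $T^i(\cdot,\cdot)$ of Subsection~\ref{s:wavefanvan}, induced by the classical vanishing viscosity. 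The second is that Theorem~\ref{t:main}, applied through hypothesis (a), ties $U_b$ and $\bar U$ together by the relation $U_b\in\mathcal M^s_V(\bar U)$, which is precisely the admissibility condition~\eqref{e:fpt2} selected by Ancona and Bianchini~\cite{AnBia} for the classical vanishing viscosity limit.

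First I would fix $\delta>0$ small enough that, along the self-similar limit, the small total variation bounds recalled in Subsection~\ref{s:pre} make $|U_b-\bar U|$ controlled by $|U_0-U_b|\le\delta$, so that both $|U_b-U_0|\le\beta\delta$ and $|U_b-\bar U|\le\beta\delta$ hold and Theorem~\ref{t:main} is applicable. Next I would establish $\phi_i(s_i,U^+)=T^i(s_i,U^+)$ for $i=k+1,\dots,n$: setting $\e=0$ in the autonomous system~\eqref{S3: autonomous ODE} gives exactly the travelling wave system~\eqref{e:biabre2}, so the reduced system~\eqref{e:cen:red} at $\e=0$ and the reduced system~\eqref{e:biabre3} describe the same curve of admissible states --- this curve being independent of the non unique choice of center manifold --- whence the fixed point problems~\eqref{e:cen:fi} (with $\e=0$) and~\eqref{e:biabre} share the same solution $(V_i,\omega_i,\xi_i)$; in particular the profiles $V_j$ and the speeds $\xi_j$ occurring in the representation~\eqref{e:u} coincide with those in~\eqref{e:z}. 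Then I would invoke hypothesis (a): since $\bar U$ satisfies conditions (1) and (2) of Theorem~\ref{t:main}, the theorem yields $U_b\in\mathcal M^s_V(\bar U)$, i.e.\ $U_b=\phi_s(S,\bar U)$ for some $S\in\R^k$. Finally, hypothesis (b) together with the construction of the wave fan curves gives $\bar U=\phi_{k+1}\big(s_{k+1},\dots,\phi_n(s_n,U_0)\dots\big)$, which by the previous step equals $T^{k+1}\big(s_{k+1},\dots,T^n(s_n,U_0)\dots\big)$; substituting into $U_b=\phi_s(S,\bar U)$ produces $U_b=\mathcal G_{U_0}(S,s_{k+1},\dots,s_n)$, i.e.\ exactly~\eqref{e:fpt2}.

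To conclude, Ancona and Bianchini showed that~\eqref{e:fpt2} determines $S,s_{k+1},\dots,s_n$ uniquely; hence these parameters coincide with the ones that fix $Z$ through~\eqref{e:fpt2} and~\eqref{e:z}. Since the wave curves and the profiles $(V_j,\xi_j)$ are the same, the piecewise formulas~\eqref{e:u} and~\eqref{e:z} define the very same self-similar function, so $U(t,x)=Z(t,x)$. Moreover, as $Z$ is the unique limit of the classical vanishing viscosity approximation~\eqref{e:i:vva} and every subsequential limit of $U^\e$ satisfying (a) and (b) has just been shown to equal this $Z$, the convergence of $U^\e$ actually holds without extracting subsequences.

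I expect the main obstacle to be the identification $\phi_i=T^i$. Although setting $\e=0$ formally turns the self-similar autonomous system into the travelling wave system, one must argue carefully that the two center manifold reductions produce the same curve of admissible states in spite of the non-uniqueness of the center manifolds, and that this identification is preserved under the concave/convex envelope construction entering~\eqref{e:cen:fi} and~\eqref{e:biabre}; this comparison of the two fixed point problems is exactly where the analysis of Dafermos~\cite[Section 9.8]{D} is needed, together with the construction of Bianchini and Bressan~\cite{BiaBrevv}. A secondary, more routine point is the bookkeeping needed to ensure that all states stay in the small neighborhood where Theorem~\ref{t:main} and the two wave fan curve constructions apply.
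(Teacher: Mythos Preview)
Your proposal is correct and follows essentially the same route as the paper's proof: invoke Theorem~\ref{t:main} to convert hypothesis (a) into $U_b\in\mathcal M^s_V(\bar U)$, identify the self-similar wave fan curves $\phi_i$ with the Bianchini--Bressan curves $T^i$ by observing that system~\eqref{e:cen:red} at $\e=0$ is a center-manifold reduction of~\eqref{e:biabre2} and that the resulting fixed point (hence the limit $Z$) is independent of the choice of center manifold~\cite[Remark~14.2]{BiaBrevv}, and then match the parameters through the relation~\eqref{e:fpt2}. The only cosmetic difference is that the paper first writes the analogous relation $U_b=\mathcal F_{U_0}(S,s_{k+1},\dots,s_n)$ and proves its local invertibility directly via the Jacobian, whereas you first identify $\phi_i=T^i$ and then quote the uniqueness for $\mathcal G_{U_0}$ from Ancona--Bianchini; the two arguments are equivalent.
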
 

\begin{proof}
The convergence result follows from the analysis in Joseph and LeFloch~\cite{JoLeF:sv}. Indeed, the proof of the compactness result in the first part of~\cite{JoLeF:sv} can be straightforwardly extended to the non conservative case.  We now focus on the characterization of the limit. 

Theorem~\ref{t:main} states that condition ($a$) in the statement of Proposition~\ref{t:cor} is equivalent to assuming that 
$U_b \in \mathcal M^s_V (\bar U)$ and hence by relying on conditions ($a$) and ($b$), we deduce the relation
\begin{equation}
\label{e:fpt}
         U_b = \mathcal F_{U_0} (S, s_{k+1}, \dots s_n) =  \phi_s \Big(S, \phi_{k+1} \big( s_{k+1}, \dots \phi_n (s_n, U_0)  \dots \big) \Big),
\end{equation}
where $\phi_s$ is as in~\eqref{e:fpt2} a function parameterizing the stable manifold and the function $\phi_j$, $j=k+1, \dots, n$ is the $j$--wave fan curve induced by the self-similar viscous approximation and is defined by~\eqref{e:cen:fi}. We note that $\mathcal F_{U_0}$ is a function from $\R^n$ to $\R^n$ and we claim that it is locally invertible. Indeed, we recall that the stable manifold $\mathcal M^s (\bar U)$ is tangent at the origin to the stable space $V^s(\bar U)$ and the curve $\phi_j (s_j, U^+)$ is tangent at $s_j =0$ to the eigenvector $R_j (U^+)$ and therefore, the Jacobian matrix of $ \mathcal F_{U_0}$ evaluated at the point $S = \vec 0$, $s_{k+1} = \dots = s_n=0$ is the $n \times n$ matrix whose columns are the eigenvectors $R_1 (U_0), \dots , R_n ( U_0)$. By relying on the Local Invertibility Theorem, we conclude that, if $|U_0 - U_b|$ is sufficiently small, then relation~\eqref{e:fpt} uniquely determines the values of $S, s_{k+1}, \dots, s_n$.

We can now conclude the proof  by combining the following two remarks. First, it should be noted that, in general, the center manifold about an equilibrium point is not unique and, hence, the functions $\hat R_i$ and $\hat \lambda_i$ in~\eqref{e:biabre}, \eqref{e:biabre3} \emph{depend} on the choice of the center manifold. However, as proven in~\cite[Remark 14.2, page 305]{BiaBrevv}, the limit $Z(t, x)$ that admits representation~\eqref{e:z} \emph{does not} depend on this choice. Second, by construction, the solutions of system~\eqref{e:cen:red} satisfying the additional condition $\e=0$ are solutions of~\eqref{e:biabre2} lying on a center manifold about the equilibrium point $\big(V_i =  U^+, W = \vec 0, \xi = \lambda_i (U^+) \big)$. By comparing~\eqref{e:cen:fi} and~\eqref{e:biabre} and using the fact that the fixed point of~\eqref{e:biabre} does not depend on the choice of the center manifold, we conclude that the function $U$ in~\eqref{e:u} coincides with the unique function $Z$ in~\eqref{e:z}. The proof is complete.
\end{proof}

\begin{remark} It should be noted that Joseph and LeFloch in~\cite[Theorem 4.2]{JoLeF:sv} showed that the limit $U$ of the self-similar approximation~\eqref{e:i:ssvva} satisfies condition ($a$) of Proposition~\ref{t:cor} for the conservative case, $A(U) = DF(U)$.
\end{remark}

\section{Proof of Theorem~\ref{t:main}}
\label{s:proof}
In this section, we provide the proof of Theorem~\ref{t:main}. 

By relying on the proof of the Stable Manifold Theorem (see Perko~\cite[Section 2.7]{Perko}) we have that, if there exists a function $V^0$ satisfying~\eqref{e:i:c2}, then $U_b \in \mathcal M^s_V (\bar U).$
Establishing the opposite implication amounts to show that, if $U_b \in \mathcal M^s_V (\bar U)$, then properties (1) and (2) of the theorem hold. To accomplish this, we proceed in four  steps: 
\begin{enumerate}
\item[$\bullet$] in Section~\ref{sus:cs}, we restrict system ~\eqref{S3: autonomous ODE}  to a center-stable manifold;
\item[$\bullet$] in Section~\ref{sss:app}, we construct the approximating sequence;
\item[$\bullet$] in Section~\ref{sus:limit}, we establish the convergence of the sequence and study the limit;
\item[$\bullet$] in Section~\ref{sus:conclu}, we eventually conclude the proof of Theorem~\ref{t:main}.
\end{enumerate}
Note that in the following we fix a state $\bar U \in \R^n$ and we exhibit constants $c_V$ and $\delta$ such that, for every $U_b$ satisfying $|\bar U - U_b | \leq c_V \delta$ and $U_b \in \mathcal M^s_V (\bar U)$, properties (1) and (2) in the statement of Theorem~\ref{t:main} are satisfied. The values of $c_V$ and $\delta$ \emph{a priori} depend on the state $\bar U$, however,  as a matter of fact, the values of these constant only depend on some regularity coefficients of suitable functions constructed in Section~\ref{sus:cs}. It turns out that the values of these functions are uniformly bounded on sufficiently small compact sets and hence, the statement of Theorem~\ref{t:main} is valid for every $\bar U$ in a small enough neighborhood of a given state $U_0$.

\subsection{Solutions lying on a center-stable manifold}
\label{sus:cs}
In this subsection, we analyze the solutions lying on a center-stable manifold for the boundary layers.

By linearizing system~\eqref{S3: autonomous ODE} about the equilibrium point ${(V,\omega,\xi,\e)= (\bar U, \vec 0, 0 ,0)}$, we obtain
\be\label{e:censt:li}
         \left\{
	\begin{array}{ll}
	V'& = \displaystyle\sum_{j=1}^n \omega_j R_j(\bar U) \\ 
	\omega_j' & = \lambda_j(\bar{U}) \omega_j \quad \text{for every $j =1, \dots, n$}\\ 
	\xi' & =\e \phantom{\displaystyle \sum_{j=1}^n} \\ 
	\e' & = 0 \phantom{\displaystyle \sum}\;.
	\end{array}
	\right. 
\ee
By the non characteristic condition~\eqref{e:i:sesp}, we have that $ \lambda_j(\bar{U}) \neq 0$ for every $j=1, \dots, n$ and therefore, the center-stable space to~\eqref{e:censt:li} is described by the vectors $(V, \omega_1, \dots \omega_n, \xi, \e)$ satisfying 
$V \in \R^n$ and  ${\omega_{k+1} =0, \dots ,\omega_n=0}$. This implies that its dimension is $n + k+2$. In the following, to simplify the exposition we use the notation 
$
   W = (\omega_1, \dots ,\omega_n)^t.
$

Next, we apply the Center-Stable Manifold Theorem and we refer the reader to the book by Katok and Hasselblatt~\cite{KHass} for an extended discussion on this subject. Here, we only recall the properties needed in this article. By the Center-Stable Manifold Theorem, it follows that in a sufficiently small neighbourhood of the equilibrium point $(\bar U, \vec 0, 0 , 0)$ there is a so-called center-stable manifold, which is locally invariant for system~\eqref{S3: autonomous ODE} and it has dimension $n + k+2$ as the center-stable space. 
In general, the center-stable manifold of a given equilibrium point is not unique. However, any center-stable manifold of the equilibrium $(\bar U, \vec 0, 0 , 0)$ for~\eqref{S3: autonomous ODE} is always tangent to the center-stable space at $(\bar U, \vec 0, 0 , 0)$. Moreover, let $c$ be the same constant as in~\eqref{e:i:sesp}, then any center-stable manifold contains all the orbits 
$\big(V(\zeta), W (\zeta), \xi (\zeta), \e(\zeta)\big)$ that are confined in a sufficiently small neighborhood of $(\bar U, \vec 0, 0 , 0)$ and satisfy
$$
    \lim_{\zeta \to + \infty} \Big[ |V(\zeta) - \bar U | + 
    | W (\zeta)|+ | \xi (\zeta)| + |\e(\zeta)| \Big] \exp (- c \zeta / 2) =0 \;.
$$
By taking the value of $\delta$ in the statement of Theorem~\ref{t:main} sufficiently small, we now fix a center-stable manifold $\mathcal M^{cs}$ defined in the neighborhood of $(\bar U, \vec 0, 0, 0)$ given by
\begin{equation}
\label{e:bds}
       |V - \bar U| \leq \delta, \quad |W | \leq \delta, \quad  |\xi| \leq \delta, \quad |\e| \leq \delta. 
\end{equation}       
Note that actually $\mathcal M^{cs}$ depends on $\bar U$, but to simplify notations we do not explicitly indicate this dependence.  
Also, using the notation ${W_{cs} = ( \omega_1, \dots \omega_{k})^t}$, the parameterization of $\mathcal M^{cs}$ can be chosen in such a way that 
\begin{equation}
\label{e:w}
     \mathcal M^{cs} = \big\{ (V,W, \xi, \e) \; \textrm{such that} \; W = \Phi (V,W_{cs}, \xi, \e) \big\}
\ee
for a suitable function $\Phi$. 
In particular, if we restrict system~\eqref{S3: autonomous ODE} to $\mathcal M^{cs}$, then our unknowns are the functions $V(\zeta)$, $\wcs(\zeta)$, $\xi(\zeta)$, $\e(\zeta)$.  In the following lemma, we derive the ODE satisfied by $(V, \wcs, \xi, \e)$.

\begin{lemma}
\label{l:redu} Let $\mathcal M^{cs}$ be the aforementioned center-stable manifold of~\eqref{S3: autonomous ODE}.  If the constant $\delta$ is sufficiently small, then any solution of~\eqref{S3: autonomous ODE} lying on $\mathcal M^{cs}$ and satisfying bounds~\eqref{e:bds} 
                     is also a solution of the system
          \be
          \label{e:redu}
         \left\{
	\begin{array}{ll}
	V'& = \Psi \, \wcs   \phantom{\Big( }  \\ 
	\wcs' & =  \Big( \Lambda - \xi \Upsilon \Big) \wcs   \\ 
	\xi' & =\e \phantom{\Big( }  \\ 
	\e' & = 0  \phantom{\Big( }\;.
	\end{array}
	\right. 
\ee         
Here, the function $\Psi=\Psi ( V, \wcs, \xi, \e)$ takes values in the space of ${n \times k}$ matrices and both the functions  
$\Lambda= \Lambda ( V, \wcs, \xi, \e)$ and $\Upsilon=\Upsilon ( V, \wcs, \xi, \e)$ take values in the space of ${k \times k}$ matrices. Moreover, $\Lambda{(\bar U, \vec 0, 0, 0)}$ is the diagonal matrix with eigenvalues 
${\lambda_1(\bar U), \dots \lambda_{k} (\bar U)}$, while $\Upsilon{(\bar U, \vec 0, 0, 0)}$ is the identity matrix. 
\end{lemma}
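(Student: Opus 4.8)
The plan is to exploit the center-stable manifold parameterization~\eqref{e:w}, namely $W = \Phi(V, \wcs, \xi, \e)$, to eliminate the ``slave'' components $\omega_{k+1}, \dots, \omega_n$ from system~\eqref{S3: autonomous ODE}, and then to rewrite the equation for the surviving component $\wcs = (\omega_1, \dots, \omega_k)^t$ in the factored form displayed in~\eqref{e:redu}. First I would substitute $W = \Phi(V, \wcs, \xi, \e)$ into the first equation $V' = \sum_j \omega_j R_j(V)$; since the right-hand side is linear in $W$, this immediately produces $V' = \Psi(V, \wcs, \xi, \e)\, \wcs$ for an explicit $n \times k$ matrix $\Psi$ whose columns are built from $R_1(V), \dots, R_k(V)$ plus correction terms coming from the components $\Phi_{k+1}, \dots, \Phi_n$ of $\Phi$, which are themselves $O(|\wcs|)$ near the equilibrium because $\mathcal M^{cs}$ is tangent to the center-stable space (where $\omega_{k+1} = \dots = \omega_n = 0$) at $(\bar U, \vec 0, 0, 0)$. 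In particular $\Psi(\bar U, \vec 0, 0, 0)$ has columns $R_1(\bar U), \dots, R_k(\bar U)$, though the lemma as stated only records the values of $\Lambda$ and $\Upsilon$, so this last point is a sanity check rather than a required conclusion.

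The substantive step is the equation for $\wcs$. For each $j = 1, \dots, k$ the original equation reads $\omega_j' = [\lambda_j(V) - \xi]\omega_j + \sum_{h,l} \beta_{jhl}\,\omega_h\,\omega_l$, where now every $\omega_h$ with $h > k$ is to be read as $\Phi_h(V, \wcs, \xi, \e)$. The key algebraic observation is that every term on the right-hand side is \emph{divisible by $\wcs$} in a suitable matrix sense: the linear term obviously is, and each quadratic term $\beta_{jhl}\omega_h\omega_l$ contains at least one factor among $\omega_1, \dots, \omega_k$ or a factor $\Phi_h$ with $h > k$ which, as noted, vanishes on $\{\wcs = 0\}$. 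Hence one can write $\omega_j' = \sum_{m=1}^k M_{jm}(V, \wcs, \xi, \e)\,\omega_m$ for suitable scalar functions $M_{jm}$; collecting these into a $k \times k$ matrix $M$ and splitting off the diagonal part $\operatorname{diag}(\lambda_1(V), \dots, \lambda_k(V))$ from the $\xi$-multiple, one sets $\Lambda := M + \xi\Upsilon$ with $\Upsilon$ chosen so that at the equilibrium $\Lambda(\bar U, \vec 0, 0, 0) = \operatorname{diag}(\lambda_1(\bar U), \dots, \lambda_k(\bar U))$ and $\Upsilon(\bar U, \vec 0, 0, 0) = I$; concretely, the $-\xi\omega_j$ term forces the $-\xi\Upsilon\wcs$ contribution with $\Upsilon$ equal to the identity at the base point up to the $O(|\wcs|, |\xi|, |\e|)$ corrections that come from expressing the slave variables through $\Phi$. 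The remaining equations $\xi' = \e$ and $\e' = 0$ are untouched. The only point requiring the smallness of $\delta$ is that this whole manipulation — in particular the division of the quadratic terms by $\wcs$ and the regularity of $\Psi, \Lambda, \Upsilon$ — is carried out inside the neighborhood~\eqref{e:bds} where $\Phi$ is defined and $C^1$; on a smaller neighborhood the factorization is unambiguous and the matrices inherit the regularity of $\Phi$ and of the eigenvector fields $R_j$.

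I expect the main obstacle to be purely bookkeeping: making the ``divisibility by $\wcs$'' rigorous and canonical rather than ad hoc, i.e. verifying that the slave components $\Phi_h$ ($h > k$) really do vanish to first order on $\{\wcs = 0, \xi = 0, \e = 0\}$ — this follows from tangency of $\mathcal M^{cs}$ to the center-stable space together with invariance of the subset $\{\wcs = 0\}$ under the reduced flow, but it should be spelled out — and then tracking exactly which terms land in $\Lambda$ versus $\Upsilon$ so that the claimed base-point values hold. No hard analysis is involved beyond the Center-Stable Manifold Theorem, which has already been invoked; the content of the lemma is that the reduced system retains the \emph{structure} $\wcs' = (\Lambda - \xi\Upsilon)\wcs$ with the correct linearization, and this is what the subsequent sections will need.
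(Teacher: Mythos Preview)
Your approach is correct and in fact more direct than the route the paper takes. The paper does \emph{not} simply restrict the first $k$ equations $\omega_j' = [\lambda_j(V)-\xi]\omega_j + \sum_{h,l}\beta_{jhl}\omega_h\omega_l$ to the manifold, as you propose. Instead, it differentiates the full graph relation $W=\Phi_{cs}(V,\wcs,\xi,\e)\,\wcs$ along solutions, obtaining an identity of the form $(C+\Phi_{cs})\wcs' = \big[\cdots\big]\wcs$, and then constructs a left inverse $L$ of the $n\times k$ matrix $C+\Phi_{cs}$ (which has full column rank near the equilibrium) to solve for $\wcs'$. This produces $\Lambda = L\,\mathrm{Diag}(\lambda_1,\dots,\lambda_n)\,\Phi_{cs} + \text{lower order}$ and $\Upsilon = L\Phi_{cs}$, so that $\Upsilon$ is genuinely a nonconstant $k\times k$ matrix equal to $I_k$ only at the base point. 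Your route, by contrast, yields $\Upsilon \equiv I_k$ identically, which is a strictly stronger conclusion and would in fact kill one of the two perturbation terms in the later fixed-point problem~\eqref{e:app:sy}. Both arguments are valid; yours is shorter, while the paper's has the mild advantage of never singling out the first $k$ components of $W$ and working instead with the full $W$-equation.

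One point in your write-up needs sharpening. You justify $\Phi_h = O(|\wcs|)$ for $h>k$ by tangency of $\mathcal M^{cs}$ to the center-stable space at $(\bar U,\vec 0,0,0)$; but tangency only gives $\Phi_h = O(|V-\bar U|^2+|\wcs|^2+\xi^2+\e^2)$, which is not enough. What is actually needed, and what you must use, is $\Phi(V,\vec 0,\xi,\e)=\vec 0$ for \emph{every} $(V,\xi,\e)$ in the neighborhood~\eqref{e:bds}, not only at the equilibrium. The paper obtains this by observing that the whole set $\{(V,W=\vec 0,\xi,\e)\}$ is invariant for~\eqref{S3: autonomous ODE} and has no exponential growth, hence lies on $\mathcal M^{cs}$; your phrase ``invariance of the subset $\{\wcs=0\}$ under the reduced flow'' points in this direction but is not quite the same statement. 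Once $\Phi(V,\vec 0,\xi,\e)=\vec 0$ is established, Hadamard's lemma gives the factorization $\Phi=\Phi_{cs}\,\wcs$ with $\Phi_{cs}$ regular, and your divisibility argument for the quadratic terms goes through cleanly.
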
 
\begin{proof} 
First, we observe that the vector $(V, W = \vec 0, \xi, \e)$ belongs to $\mathcal M^{cs}$ for any $V$, $\xi$ and $\e$. Hence,
$$
     \Phi (V, \wcs = \vec 0, \xi, \e) = \vec 0 \quad \textrm{for all $V$, $\xi$, $\e$},
$$
where $\Phi$ is the function given in~\eqref{e:w}. By relying on the regularity of the map $\Phi$ and using~\eqref{e:w}, we get the expression
\be\label{e:wcs}
 W=  \Phi (V, W_{cs}, \xi, \e) = \Phi_{cs}  (V, W_{cs}, \xi, \e) \wcs
\ee
for a suitable regular map $\pcs$ taking values in the space of the $n \times k$ matrices. 
Since the manifold $\mathcal M^{cs}$ is tangent to the center-stable space at the equilibrium ${(V, \wcs, \xi, \e)=(\bar U, \vec 0, 0 ,0)}$, then the 
columns of the matrix 
${ \Phi_{cs}  (\bar U,  \vec 0, 0 ,0)}$ are the eigenvectors $R_1 (\bar U), \dots R_k (\bar U)$:
\begin{equation}
\label{e:columns}
           \Phi_{cs}  ( \bar U,  \vec 0, 0 ,0) = \Big( R_1(\bar U) \vert \dots \vert R_{k} (\bar U) \Big).
\end{equation}
It should be noted that the first equation of system~\eqref{S3: autonomous ODE} can be written as
$$
     V' =  \Big( R_1(V) \vert \dots \vert R_{n} (V) \Big) W
$$
and thus, by substituting~\eqref{e:wcs} above, we get that 
\begin{equation}
\label{e:b}
    V' = \Psi( V, \wcs, \xi, \e) \wcs,
\ee
for a suitable matrix $\Psi$ taking values in the space of the $n \times k$ matrices. 

Next, we differentiate~\eqref{e:wcs} to get 
$$
    W'= \pcs' \wcs + \pcs \wcs'. 
$$
We also have 
\begin{equation}
\label{e:der}
\begin{split}
             \pcs'  ( V, \wcs, \xi, \e)  \wcs   &
           =   B V' +  C  \wcs' + \xi'   \partial_{\xi} \pcs   \wcs  = \\
           &   = B    \Psi   \wcs +  C  \wcs ' + \e ( \partial_{\xi}  \pcs  ) \wcs\;,   \\
\end{split}
\end{equation}
where the functions $B=B (V, \wcs, \xi, \e)$ and $C=C ( V, \wcs, \xi, \e)$ take values in the space of $n \times n$ and $n \times k$ matrices, respectively and satisfy
\be
\label{e:B}
            B ( V, \vec 0, \xi, \e) = \mathbf{0}_{n \times n} \quad  C ( V, \vec 0, \xi, \e)  = \mathbf{0}_{n \times k}
             \quad \textrm{ for every $V$, $\xi$ and $\e$.}
\ee   
The exact expressions of $B$ and $C$ are not relevant here. 

By substituting~\eqref{e:der} into the second equation of~\eqref{S3: autonomous ODE}, we obtain
\begin{equation}
\label{e:der2}
\begin{split}
            W' 
            & =  \pcs \wcs '  + B    \Psi   \wcs +  C  \wcs ' + \e (  \partial_{\xi}  \pcs )  \wcs = \\
            & = \Big( \mathrm{Diag}\big( \lambda_1 , \dots ,\lambda_n \big)- \xi I \Big) \pcs \wcs + F\wcs, \\
\end{split}
\end{equation}
where the $\mathrm{Diag}\big( \lambda_1 , \dots ,\lambda_n  \big)$ denotes the $n \times n$ diagonal matrix with eigenvalues 
$ \lambda_1 (V), \dots ,\lambda_n (V)$ and $I$ stands for the $n \times n$ identity matrix. The function $F ( V, \wcs, \xi, \e) \wcs$ takes values in the space of the $n \times n$ matrices and comes from the quadratic terms in the second equation of~\eqref{S3: autonomous ODE}.
Hence, by choosing a sufficienly small constant $\delta$ in~\eqref{e:bds}, we have the bound
\be
\label{e:F}
     | F ( V, \wcs, \xi, \e)   | \leq  K |\wcs|
\ee
for some suitable constant $K>0$. By rewriting~\eqref{e:der2}, we infer 
\be
\label{e:n}
           \Big(  C + \pcs \Big) \wcs' = \Big( \mathrm{Diag} \big( \lambda_1 , \dots ,\lambda_n \big)- \xi I_n \Big) \pcs \wcs +
           F\wcs - B    \Psi   \wcs -  \e ( \partial_{\xi}  \pcs  ) \wcs
\ee
and by relying on~\eqref{e:columns} and~\eqref{e:B}, we deduce that 
$$
     \Big(  C + \pcs \Big)   ( \bar U,  \vec 0, 0 ,0) = \Big( R_1(\bar U) \vert \dots \vert R_{k} (\bar U) \Big), 
$$
hence all the columns of this matrix are linearly independent at $ ( \bar U,  \vec 0, 0 ,0)$. By continuity, the columns of the matrix $  C + \pcs $ computed at $( V,  \wcs, \xi , \e)$ are linearly independent, provided that $( V,  \wcs, \xi , \e)$ satisfies~\eqref{e:bds} for a sufficiently small constant $\delta$. Hence, there exists a function $L ( V,  \wcs, \xi , \e)$ taking values in the space of the $k \times n$ matrices and satisfying 
$$
      L ( V,  \wcs, \xi , \e)  \Big(  C + \pcs \Big) ( V,  \wcs, \xi , \e) \equiv  I_{k} \;.
$$ 
By multiplying~\eqref{e:n} on the left by $L$, we arrive at 
$$
    \wcs' = \Big( \Lambda - \xi \Upsilon \Big)  \wcs , 
$$ 
where 
$$
   \Lambda  ( V,  \wcs, \xi , \e) = L  \mathrm{Diag} \big( \lambda_1 , \dots \lambda_n \big) \pcs +  L F - L B \Psi  - \e L ( \partial_{\xi}  \pcs  ) 
$$
 is a function taking values in the space of $k \times k$ matrices and satisfying 
$$
     \Lambda  ( \bar U,  \vec 0, 0 , 0) = \mathrm{Diag} \big( \lambda_1 (\bar U) , \dots ,\lambda_{k} (\bar U) \big). 
$$ 
To get the above equality, we use properties~\eqref{e:columns},~\eqref{e:B} and~\eqref{e:F}. Finally,
$
    \Upsilon ( V,  \wcs, \xi , \e) = L \pcs 
$
takes values in the space of $k \times k$ matrices and by relying on~\eqref{e:columns} and~\eqref{e:B}, we deduce that $\Upsilon( \bar U,  \vec 0, 0 , 0) $ is the $k\times k$ identity matrix. 
This completes the proof of Lemma~\ref{l:redu}. 
\end{proof}

\subsection{Construction of the approximating sequence} 
\label{sss:app}
Here, we construct a solution to the ODE~\eqref{e:redu} derived in the previous subsection. Let  $\{ \bar U^\e \} \subseteq \R^n$ be a family of data 
as in the statement of Theorem~\ref{t:main}, namely $\bar U^\e \to \bar U$ as $\e \to 0^+$.

To begin with, we fix a positive constant $ \bar \e >0$ and a vector $\bwcs \in \R^{k}$. Roughly speaking, the goal is to construct a solution to~\eqref{e:redu}, hence lying on $\mathcal{M}^{cs}$, such that $\wcs (0) = \bwcs$, 
$\e(\zeta) = \bar \e$,  $\lim_{\zeta \to + \infty} V(\zeta) = \bar U^\e$ and $\xi(0)=0$. However, in this problem a technical difficulty arises since by imposing $\xi(0) =0$, system~\eqref{e:redu} yields   $\xi (\zeta) = \bar \e \zeta$, whereas the manifold $\mathcal M^{cs}$ is defined for $|\xi| \leq \delta$ (see formula~\eqref{e:bds}). To overcome this difficulty, we preserve this bound $|\xi| \leq \delta$ by restricting to the interval 
$\zeta \in [0,  \delta / \bar \e \, ] $ and replacing the condition $\lim_{\zeta \to + \infty} V(\zeta) =  \bar U^\e$ with $V (\delta / \bar \e) =  \bar U^\e$. Also, to simplify the notation, we use $\e$ instead of $\bar \e$ and we write $\Ve$ and $\wcse$ to emphasize the dependence of the first two components of the solution to~\eqref{e:redu} on the parameter $\e$. After applying the variation of constants formula to~\eqref{e:redu}, we obtain the fixed point problem
\begin{equation}
\label{e:app:sy}
\left\{
\begin{array}{ll}
          \Ve(\zz) = \displaystyle \bar U^\e + \int_{\delta / \e}^{\zz} \Psi \Big( \Ve(y), \wcse(y), \e y, \e \Big) \wcse (y) \, dy\;,  \\ \\
          \wcse (\zz) = \displaystyle \exp{\Big( \bar \Lambda \zz - \e \zz^2 / 2 \,I_k \Big) } \bwcs + \\
          \qquad \qquad +  \displaystyle \int_0^{\zz} 
           \exp{\Big( \bar \Lambda (  \zz - y)  - I_k \e \zz^2 / 2  +  I_k \e y^2 / 2   \Big) }  
           \Bigg\{ \Bigg[ \Lambda \Big( \Ve(y), \wcse(y), \e y, \e \Big) - 
            \bar \Lambda \Bigg] + \\ \qquad \qquad -
            \e y \Bigg[ \Upsilon \Big( \Ve(y), \wcse(y), \e y, \e \Big) - I_k 
           \Bigg] \Bigg\}  \wcse(y) \, dy,   \\
\end{array}
\right.
\end{equation}
where $\bar \Lambda=\Lambda(\bar U, \vec 0, 0, 0)$. We recall that $\bar \Lambda$ is a diagonal matrix having only negative eigenvalues and $\Upsilon (\bar U, \vec 0, 0, 0) = I_k$. The following lemma provides  existence and uniqueness results for the solution of~\eqref{e:app:sy}. 

\begin{lemma}
\label{l:contra}
            Let $\Ve$ and $\wcs^\e$ be as in the statement of Lemma~\ref{l:redu}.
            If the constant $\delta$ in the statement of Lemma~\ref{l:redu} is sufficiently small, 
then we can find constants $c_V$, $\theta$ and $c_W$ which do not depend on $\e$ and satisfy the following properties: 
${0< c_V, \theta, c_W< 1}$ and for every sufficiently small $\e >0$ and for $\bwcs \in \R^{k}$ with 
            \be
            \label{e:theta}
            |\bwcs|\leq \theta \delta, 
            \ee
            we have that system~\eqref{e:app:sy} admits a unique solution satisfying 
            \begin{equation}
            \label{e:bd}
                |\Ve (\zz) -  \bar U| \leq c_V  
                 \delta  \quad \textrm{and} \quad  |\wcs^\e (\zz) | \leq c_W \exp \big(- c \,  \zz / 2 \big) \delta \;, 
            \ee
for every $\zz \in [0,  \delta/ \e ]$. 
\end{lemma}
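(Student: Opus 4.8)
The plan is to prove Lemma~\ref{l:contra} by a standard Banach fixed point argument applied to the map defined by the right-hand side of~\eqref{e:app:sy}. First I would fix the relevant function space: let $X$ be the set of pairs of continuous functions $(\Ve, \wcse)$ on $[0, \delta/\e]$ satisfying the bounds~\eqref{e:bd} for constants $c_V, c_W$ to be chosen, equipped with the norm $\|(\Ve,\wcse)\| = \sup_{\zz}|\Ve(\zz)-\bar U| + \sup_{\zz} e^{c\zz/2}|\wcse(\zz)|$. This is a complete metric space. Denote by $\mathcal{T}$ the map sending $(\Ve,\wcse)$ to the right-hand side of~\eqref{e:app:sy}; the goal is to show that, for $\delta$ small and $|\bwcs|\le\theta\delta$ with $\theta$ chosen appropriately, $\mathcal{T}$ maps $X$ into itself and is a contraction, with constants uniform in $\e$.

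The key estimates are the following. For the $\wcse$-component, the crucial point is the exponential decay of $\exp(\bar\Lambda(\zz-y) - I_k\e\zz^2/2 + I_k\e y^2/2)$: since $\bar\Lambda$ has eigenvalues $\lambda_1(\bar U),\dots,\lambda_k(\bar U) < -c < 0$ by~\eqref{e:i:sesp}, we have $|\exp(\bar\Lambda(\zz-y))|\le e^{-c(\zz-y)}$ for $y\le\zz$, and the extra factor $\exp(-\e\zz^2/2+\e y^2/2) = \exp(-\e(\zz-y)(\zz+y)/2)\le 1$ only helps. Thus the homogeneous term is bounded by $e^{-c\zz}|\bwcs|$. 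In the integral term, the brackets $[\Lambda(\cdots)-\bar\Lambda]$ and $\e y[\Upsilon(\cdots)-I_k]$ are small: by continuity of $\Lambda,\Upsilon$ and the bounds~\eqref{e:bds},~\eqref{e:bd}, together with $|\e y|\le\delta$, each bracket has norm $O(\delta)$. Combined with $|\wcse(y)|\le c_W e^{-cy/2}\delta$ and the decay of the exponential kernel, the integral is bounded by $C\delta \cdot c_W e^{-c\zz/2}\delta$, which is $\le \frac{1}{2}c_W e^{-c\zz/2}\delta$ once $C\delta\le 1/2$; choosing $\theta$ so that $|\bwcs|\le\theta\delta$ contributes at most $\frac12 c_W e^{-c\zz/2}\delta$ (noting $e^{-c\zz}\le e^{-c\zz/2}$) closes the bound for $\wcse$. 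For the $\Ve$-component, one uses $|\Psi|$ bounded (it is continuous near the equilibrium, with $\Psi(\bar U,\vec 0,0,0)$ having columns $R_1(\bar U),\dots,R_k(\bar U)$), so that $|\Ve(\zz)-\bar U|\le |\bar U^\e - \bar U| + \int_\zz^{\delta/\e}\|\Psi\|\,|\wcse(y)|\,dy \le |\bar U^\e-\bar U| + \|\Psi\|\, c_W\delta \int_0^\infty e^{-cy/2}dy = |\bar U^\e-\bar U| + \tfrac{2}{c}\|\Psi\| c_W\delta$; since $\bar U^\e\to\bar U$, for $\e$ small the first term is $\le c_V\delta/2$, and the second is $\le c_V\delta/2$ for $\delta$ small, giving the bound for $\Ve$. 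The contraction estimate is entirely analogous: the difference of two images is controlled by the Lipschitz constants of $\Psi,\Lambda,\Upsilon$ (bounded on the compact neighborhood~\eqref{e:bds}) times $O(\delta)$ times the distance of the inputs, yielding a contraction factor $<1$ for $\delta$ small.

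I would then assemble these estimates: choose $c_W$ first (to absorb the homogeneous term, e.g. $c_W$ slightly bigger than $1$), then $\theta$ small enough that $\theta\le c_W/2$ say, then $c_V<1$ with $c_V$ not too small, and finally $\delta$ small enough that all the $O(\delta)$ factors above are beaten; crucially none of these choices involves $\e$, since all the function-norm bounds on $\Psi,\Lambda,\Upsilon$ and their Lipschitz constants are taken on the fixed $\e$-independent neighborhood~\eqref{e:bds}, and the exponential integrals $\int_0^{\delta/\e}e^{-cy/2}dy\le\int_0^\infty e^{-cy/2}dy = 2/c$ are bounded uniformly in $\e$. Banach's fixed point theorem then gives the unique solution in $X$, which is exactly the content of~\eqref{e:bd}.

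The main obstacle I anticipate is the bookkeeping needed to make the constants genuinely $\e$-independent while the interval of definition $[0,\delta/\e]$ blows up as $\e\to 0^+$: one must check carefully that every place where $\e$ enters — the term $\e\zz^2/2$ in the exponential, the factor $\e y$ multiplying $[\Upsilon - I_k]$, and the integration limit $\delta/\e$ — is handled by a bound that does not degenerate, using systematically $|\e y|\le\delta$ on the relevant interval and the sign $-\e\zz^2/2 + \e y^2/2\le 0$. A secondary subtlety is the dependence of $\Psi,\Lambda,\Upsilon$ (and hence of the constants) on $\bar U$ through the choice of center-stable manifold $\mathcal{M}^{cs}$; as noted in the paragraph preceding Lemma~\ref{l:redu}, these functions are uniformly controlled on small compact sets, so the constants $c_V,\theta,c_W,\delta$ can be taken uniform for $\bar U$ in a neighborhood of $U_0$, which is what is ultimately needed for Theorem~\ref{t:main}.
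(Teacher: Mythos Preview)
Your approach is essentially the same as the paper's: a Banach fixed point argument applied to~\eqref{e:app:sy} on the space of pairs $(\Ve,\wcse)$ satisfying~\eqref{e:bd}, with the exponentially weighted sup-norm on the $\wcse$-component. The self-mapping estimates you sketch are correct and match the paper's.

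There is, however, one genuine technical gap in your contraction estimate. You assert that the difference of two images is ``Lipschitz constants \dots\ times $O(\delta)$ times the distance of the inputs,'' but this is not true for the $\Ve$-component with respect to $\wcse$-differences. Writing
\[
\Psi(\Ve_1,\wcsu,\ldots)\wcsu - \Psi(\Ve_2,\wcsd,\ldots)\wcsd
= \big[\Psi(\ldots_1)-\Psi(\ldots_2)\big]\wcsu + \Psi(\ldots_2)\big[\wcsu-\wcsd\big],
\]
the first term is indeed controlled by $\tilde L\,c_W\delta\,e^{-cy/2}$ times the input distance, but the second is bounded only by $M\,|\wcsu-\wcsd|(y)$, where $M=\sup|\Psi|$ is a fixed constant that does \emph{not} shrink with $\delta$. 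After integrating against $e^{-cy/2}$ this produces a term $\tfrac{2M}{c}\,\|\wcsu-\wcsd\|_W$ in the $V$-estimate, and $\tfrac{2M}{c}$ need not be smaller than $1$. With your unweighted norm $\sup|\Ve-\bar U|+\sup e^{c\zz/2}|\wcse|$ the map is therefore not a contraction in general.

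The paper repairs exactly this point by putting a small weight $\eta$ on the $V$-part of the norm, $\|\Ve\|_V=\eta\,\|\Ve\|_{C^0}$, so that the offending term becomes $\tfrac{2}{c}M\eta\,\|\wcsu-\wcsd\|_W$, which is $\le\tfrac12\|\wcsu-\wcsd\|_W$ once $\eta$ is small. This costs nothing elsewhere (the $T_2$-estimate in the $\wcse$-variable already has an $O(\delta)$ contraction factor in both inputs), and $\eta$ is $\e$-independent. If you insert this weight, your argument goes through verbatim; without it, the contraction step fails. A minor side remark: your suggestion ``$c_W$ slightly bigger than $1$'' conflicts with the stated requirement $c_W<1$; simply take $c_W<1$ and then $\theta\le c_W/2$ as you already indicate.
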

\begin{proof}
The proof relies on a standard fixed point argument based on the Contraction Map Theorem. So here, we provide only a sketch of the proof. 

Fix $\e$ and $\bwcs$ as above and define the spaces
$$
    X^{\e}_V : = \left\{ V^\e \in \mathcal C^0 \big( [0,  \delta/ \e ] \, ;  \R^n \big)\;  \textrm{such that $|V ^\e(\zz) - \bar U| \leq c_V \delta $ for every $\zz\in [0,  \delta/ \e ]$}  \right\}
$$
and 
$$
      X^{\e}_W : = \left\{ \wcs^\e  \in \mathcal C^0 \big( [0,  \delta / \e]  \, ;  \R^{k}\big):
       \, |\wcs^\e (\zz) | \leq c_W \delta \exp \big(-c \, \zz / 2 \big) 
       \textrm{for every $\zz\in [0,  \delta/ \e ]$}  \right\}.
$$
The constants $c_V$ and $c_W$ are to be determined. By equipping $X_V$ and $X_W$ with the norms
$$
    \| V^\e \|_{V} : = \eta \| V^\e \|_{ \mathcal C^0 ( [0,  \delta / \e] )} \quad \textrm{and} \quad \| \wcs^\e \|_{W} : = \sup_{ \zz \in  [0,  \delta / \e] }  \exp \big( c \,  \zz / 2 \big) |\wcs^\e(\zz) |,
$$
we obtain two closed metric spaces. In the previous expression, $\eta$ denotes a real constant to be determined and $c$ is given by~\eqref{e:i:sesp}.  

Now, we define the map 
$$
     T^{\e} : X^{\e}_V \times X^{\e}_W \to \mathcal C^0 \big( [0,  \delta/ \e ]  \, ;  \R^n \big)  \times \mathcal C^0 \big( [0,  \delta/ \e ]  \, ;  \R^{k} \big) 
$$
by setting its components $T^{\e}_1(\Ve, \wcse)$ and $T^{\e}_2 (\Ve, \wcse)$ equal to the first and the second component of the right hand side of~\eqref{e:app:sy}, respectively.

By direct computations, one can show that it is possible to choose the constants $c_W$, $c_V$, $\theta$ and $\eta$ in such a way that $T^{\e}$ takes values in the set $X^{\e}_V \times X^{\e}_W $ and is, moreover, a strict contraction with respect to both the variables $\Ve$ and $\wcse$. We recall that $\theta$ is the same constant as in~\eqref{e:theta}. The values of 
$c_W$, $c_V$, $\theta$ and $\eta$ depend on $\delta$, but are independent of $\e$. By applying the Contraction Map Theorem, we conclude that~\eqref{e:app:sy} admits a unique solution belonging to $X^{\e}_V \times X^{\e}_W$.

To give a flavour of the construction, we choose to show that 
\be 
\label{e:t1}
     \| T^{\e}_1 (\Ve_1 , \wcsu ) - T^{\e}_1  (\Ve_2 , \wcsd )  \|_V \leq \frac{1}{2}  \big( \|\Ve_1 - \Ve_2  \|_V + \| \wcsu - \wcsd \|_W \big) 
\ee
for $(V_i^\e, W_{cs\,i}^e)\in X_V^\e\times X_W^\e$, $i=1,2$.
Indeed, we estimate 
\begin{equation*}
\begin{split}
                    | T^{\e}_1 (\Ve_1 ,& \wcsu ) -  T^{\e}_1  (\Ve_2 , \wcsd )  | (\zz) \\
             & \leq \int_{0}^{\delta / \e} \Big|  \Big[ \Psi \big( \Ve_1(y), \wcsu(y), \e y, \e \big) -   \Psi \big( \Ve_2 (y), \wcsd(y), \e y, \e \big)\Big] \wcsu(y)   \Big|  \, dy \, +  \\
             & \quad +\int_{0}^{\delta / \e}  \Big| \Psi \big( \Ve_2 (y), \wcsd(y), \e y, \e \big) \Big[ \wcsu(y) - \wcsd(y) \Big] \Big| dy    \leq   \\           
             & \leq \int_{0}^{\delta / \e}  \tilde{L} \Big( |\Ve_1 - \Ve_2 | (y) + |\wcsu - \wcsd | (y) \Big) c_W \delta  \exp \big( - c \,  y / 2 \big)  \, dy + \\
             & \quad + \int_{0}^{\delta / \e}   M |\wcsu - \wcsd | (y) \, dy\;, \\
\end{split}
\end{equation*}
where $\tilde{L}$ denotes a Lipschitz constant of the function $\Psi$ with respect to both the variables $V^\e$ and $\wcs^\e$, while $M$ is a uniform bound for $\Psi$ in the neighborhood of $(\bar U, \vec 0, 0, 0)$ given by~\eqref{e:bds}, namely  
\begin{equation}
\label{e:M}
     |   \Psi \big( V^\e , \wcs^\e , \xi, \e \big)  | \leq M\;, 
\end{equation} 
for every $(V^\e, \wcs^\e, \xi, \e)$ such that $|V^\e - \bar U| \leq \delta$, $|\wcs^\e| \leq \delta$, $|\xi| \leq \delta$ and $|\e| \leq \delta$.
By using the estimate
$$
     |\wcsu - \wcsd | (y) \leq   \exp \big( - c \,  y / 2 \big) \| \wcsu - \wcsd \|_W  \quad \textrm{for  $0 \leq y \leq \delta / \e$}, 
$$
we arrive at 
\begin{equation*}
\begin{split}
    \|  T^{\e}_1 (\Ve_1 , \wcsu ) -   T^{\e}_1  (\Ve_2 , \wcsd )  \|_{V} \leq \frac{2}{c} \Big[  &\tilde{L} c_W \delta \| \Ve_1 - \Ve_2 \|_V \\
    &+ ( \tilde{L} c_W \delta \eta + M \eta)  \| \wcsu - \wcsd \|_W \Big], 
\end{split}
\end{equation*}
which implies~\eqref{e:t1} provided that $\delta$ and $\eta$ are sufficiently small. 
\end{proof}

\subsection{The structure of the limit} 
\label{sus:limit}
We now establish the convergence result and analyze the limit.
\begin{lemma}
\label{l:asar}
         Let the hypotheses of Lemma~\ref{l:contra} be satisfied, let
         $\delta$, $c_V$ and $c_W$ be the same constants as in the statement of 
         Lemma~\ref{l:contra} and denote by $(\Ve, \wcse)$ the solution of 
            system~\eqref{e:app:sy} satisfying  bounds~\eqref{e:bd}. 
            Then, as $\e \to 0^+$, the family $\{(\Ve, \wcse)\}$ converges uniformly on compact subsets of $[0, + \infty[$. The limit $V^0$ is the unique solution of the ordinary differential equation
            \be
            \label{e:lim:sy}
                     (V^0)'' = A({V^0}) (V^0)'
            \ee 
            satisfying the conditions 
             \be 
            \label{e:lim:cd}
                       (V^0)' (0) = \bwcs \quad \textrm{and} \quad \lim_{\zz \to + \infty} V^0 (\zz) = \bar U 
            \ee
           and the bounds 
            \be 
            \label{e:lim:bd1}
                         |(V^0)' (\zz) | \leq c_W \exp \big( - c \,  \zz / 2 \big) \delta 
                         \qquad 
                           |V^0 (\zz) - \bar U| \leq  c_V \delta 
                         \quad \textrm{for every $\zz \in [0,  + \infty[$}.
            \ee
            Moreover, $V^0$ satisfies 
            \be
            \label{e:lim:bd2}
                         | V^0 (\zz) - \bar U| \leq \tilde c_V \delta \exp \big( - c \,  \zz / 2 \big) \quad \textrm{for every $\zz \in [0,  + \infty[$}, 
            \ee
             for a suitable constant $\tilde c_V >0$ depending only on $\bar U$, $c_W$, $c$ and on the upper bound $M$ given by~\eqref{e:M}.
\end{lemma}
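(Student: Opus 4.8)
The plan is to pass to the limit $\e \to 0^+$ in the fixed point problem~\eqref{e:app:sy}, using the uniform-in-$\e$ bounds~\eqref{e:bd} from Lemma~\ref{l:contra} together with an Arzel\`a--Ascoli argument. First I would observe that on any compact interval $[0, L]$, once $\e$ is small enough that $\delta / \e > L$, the functions $\Ve$ and $\wcse$ are defined on $[0, L]$ and satisfy the uniform bounds~\eqref{e:bd}; moreover, from the differential equation~\eqref{e:redu} and the fact that $\Psi$, $\Lambda$, $\Upsilon$ are bounded on the neighborhood~\eqref{e:bds}, the derivatives $(\Ve)'$ and $(\wcse)'$ are uniformly bounded on $[0, L]$ as well. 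Hence $\{(\Ve, \wcse)\}$ is equibounded and equicontinuous on compact sets, so by Arzel\`a--Ascoli and a diagonal extraction there is a subsequence converging uniformly on compact subsets of $[0, +\infty[$ to some pair $(V^0, W^0)$.

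Next I would identify the limit equation. Setting $\e = 0$ in~\eqref{e:redu} gives the reduced system $V' = \Psi(V, W_{cs}, \xi, 0) W_{cs}$, $W_{cs}' = \Lambda(V, W_{cs}, \xi, 0) W_{cs}$, $\xi' = 0$; since $\xi(\zz) = \e \zz \to 0$ uniformly on compact sets, the limit $(V^0, W^0)$ solves this $\e = 0$ reduced system with $\xi \equiv 0$. But the $\e = 0$, $\xi = 0$ slice of the center-stable manifold $\mathcal M^{cs}$ is precisely a center-stable manifold for the original boundary-layer system~\eqref{e:i:1o}, so solutions of the reduced system with $\e = \xi = 0$ are exactly the solutions of $V'' = A(V) V'$ lying on that manifold; this yields~\eqref{e:lim:sy}. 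Passing to the limit in the integral equations~\eqref{e:app:sy}: the second equation gives $W^0(\zz) = \exp(\bar\Lambda \zz) \bwcs + \int_0^\zz \exp(\bar\Lambda(\zz - y))[\Lambda(\cdot,0) - \bar\Lambda] W^0(y)\, dy$, from which $W^0(0) = \bwcs$, i.e. $(V^0)'(0) = \bwcs$; for the first equation one must check that the integral $\int_{\delta/\e}^\zz \to \int_{+\infty}^\zz$, which requires that $\lim_{\e \to 0^+} \Ve(\delta/\e) = \bar U$ in a suitable sense. This is where the exponential decay estimate on $\wcse$ in~\eqref{e:bd} is essential: $|\int_{\zz}^{\delta/\e} \Psi \wcse \, dy| \le M c_W \delta \int_\zz^{\infty} e^{-cy/2}\, dy$, which is finite and allows one to send the endpoint to $+\infty$, giving $V^0(\zz) = \bar U - \int_\zz^{+\infty} \Psi(V^0, W^0, 0, 0) W^0 \, dy$ and hence $\lim_{\zz \to +\infty} V^0(\zz) = \bar U$. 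The bounds~\eqref{e:lim:bd1} follow by passing~\eqref{e:bd} to the limit.

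For the decay estimate~\eqref{e:lim:bd2}, I would use $V^0(\zz) - \bar U = -\int_\zz^{+\infty} \Psi(V^0(y), W^0(y), 0, 0) W^0(y)\, dy$ together with $|W^0(y)| \le c_W \delta e^{-cy/2}$ and $|\Psi| \le M$, giving $|V^0(\zz) - \bar U| \le M c_W \delta \int_\zz^{\infty} e^{-cy/2}\, dy = (2 M c_W / c)\, \delta\, e^{-c\zz/2}$, so $\tilde c_V = 2 M c_W / c$ works. Finally, uniqueness of $V^0$ as a solution of~\eqref{e:lim:sy} subject to $(V^0)'(0) = \bwcs$ and $\lim_{\zz \to +\infty} V^0 = \bar U$: by the Stable Manifold Theorem the set of data $(V(0), V'(0))$ producing an orbit decaying to $(\bar U, \vec 0)$ is the $k$-dimensional stable manifold $\mathcal M^s(\bar U)$, on which the flow is conjugate to a linear contraction; prescribing $V'(0) = \bwcs$ fixes the point on $\mathcal M^s(\bar U)$ uniquely (this uses that $\bwcs$ lies in the range where the graph parameterization of $\mathcal M^s$ over $W_{cs}$ is valid, guaranteed by $|\bwcs| \le \theta \delta$), hence $V^0$ is unique, and in particular the whole family $\{(\Ve, \wcse)\}$ — not just a subsequence — converges. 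The main obstacle I anticipate is the justification of the limit passage in the first component of~\eqref{e:app:sy}, where the integration endpoint $\delta/\e$ escapes to infinity; controlling this uniformly is exactly what the exponential weight in the space $X_W^\e$ was designed for, and making the convergence $\Ve(\delta/\e) \to \bar U$ precise (rather than merely formal) is the delicate point.
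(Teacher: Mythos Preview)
Your proposal is correct and follows essentially the same approach as the paper: Arzel\`a--Ascoli compactness on compact intervals (using the uniform bounds~\eqref{e:bd} and the derivative bounds coming from~\eqref{e:redu}), a diagonal extraction, passage to the limit in the integral system~\eqref{e:app:sy} to obtain the limiting fixed point problem, and the decay estimate~\eqref{e:lim:bd2} from the integral representation with precisely the constant $\tilde c_V = 2Mc_W/c$.

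The one place you diverge from the paper is in the uniqueness step. The paper argues uniqueness of the limit $(V^0, W^0_{cs})$ directly by observing that the limiting integral system (your $\e = 0$ version of~\eqref{e:app:sy}) is again a strict contraction on the same weighted space, so the fixed point is unique and hence the full family (not just a subsequence) converges. You instead invoke the Stable Manifold Theorem and the graph parameterization of $\mathcal M^s(\bar U)$ over $\bwcs$. Your route is correct but slightly less self-contained: the statement that prescribing $\bwcs$ (equivalently $W^0_{cs}(0)$) singles out a unique orbit on $\mathcal M^s(\bar U)$ is exactly what the paper establishes \emph{after} this lemma, in Section~\ref{sus:conclu}, so using it here anticipates that argument. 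The paper's contraction-based uniqueness avoids this forward reference and keeps the lemma independent of the stable-manifold identification.
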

\begin{proof}
We proceed in three steps.

\noindent
{\sc Step 1:} First, we establish compactness of the sequence $\{(\Ve,\wcse)\}$. 
Let $M$ denote a constant such that
\be
\label{e:M2}
        |   \Psi \big( V^\e , \wcs^\e , \xi, \e \big)  |, \;  
        |\Lambda \big( V^\e , \wcs^\e , \xi, \e \big)   |, \; 
         | \Upsilon \big( V^\e , \wcs^\e , \xi, \e \big)   | 
         \leq  M \quad  
\ee
for every $|V^\e - \bar U| \leq \delta$, $|\wcs^\e| \leq \delta$, $|\xi| \leq \delta$ and $|\e| \leq \delta$ and fix a compact subset $[0, b]$ of $[0,+\infty[$. 
By relying on~\eqref{e:bd}, we get that $\Ve$ satisfies the bounds 
\be
\label{e:b:V}
      \| \Ve - \bar U\|_{\mathcal C^0 ([0, b])}  \leq c_V  \delta \quad \textrm{and} \quad 
      \|  (V^{\e})' \|_{\mathcal C^0 ([0, b])}  =  \|  \Psi \wcse \|_{\mathcal C^0 ([0, b])} \leq M c_W \delta 
\ee
for every $0<\e \leq \delta / b$. Also, $\wcse$ satisfies 
\be
\label{e:b1:w}
    \| \wcse \| _{\mathcal C^0 ([0, b])}  \leq c_W  \delta 
\ee
and 
\be
\label{e:b2:w}
      \big\|  (\wcse)'  \big\|_{\mathcal C^0 ([0, b])}  =  
      \big\| \big( \Lambda - \e \zz \Upsilon \big) \wcse  \big\|_{\mathcal C^0 ([0, b])} \leq 
     M ( 1+\delta)  c_W \delta\;.  
\ee
From~\eqref{e:b:V}--\eqref{e:b2:w}, we get that the family $\{(\Ve, \wcse)\}$ satisfies all the hypotheses of Ascoli-Arzel\`{a}'s Theorem. Hence, for any given sequence $\e_n \to 0^+$, there exists a subsequence $\{\e_{n_k}\}$ and a pair $(V^0, \wcsz)$ such that 
$$
    V^{\e_{n_k}} \to V^0 \quad \textrm{and} \quad \wcs^{\e_{n_k}} \to \wcsz,
$$  
uniformly on $[0, b]$ as $\e_{n_k} \to 0^+$. We then consider a sequence $\{b_j\}$ of points such that $b_j \to + \infty$ and apply the previous argument on any interval $[0, b_j]$. By relying on a standard diagonalization procedure, we conclude that for any sequence $\e_n \to 0^+$ there exists a subsequence $\e_{n_k}$ and a pair $(V^0, \wcsz)$ such that 
$$
    V^{\e_{n_k}} \to V^0 \quad \textrm{and} \quad \wcs^{\e_{n_k}} \to \wcsz 
$$  
uniformly on the compact subsets of $[0, +\infty[$ as $\e_{n_k} \to 0^+$. In particular, $\{( V^{\e_{n_k}},  \wcs^{\e_{n_k}} )\}$ converges pointwise on $[0, + \infty[$ and hence 
from bounds~\eqref{e:bd} we deduce
\be
\label{e:bd0}
              | V^0 (\zz) -\bar U |\leq c_V  
                 \delta  \quad \textrm{and} \quad  |\wcs^0 (\zz) | \leq c_W \exp \big( - c \,  \zz / 2 \big) \delta \quad \textrm{for every $\zz \in [0,  + \infty[ $.}
\ee
{\sc Step 2:} Next, we prove that the limit $(V^0, \wcsz)$ is independent of the subsequence $\{\e_{n_k}\}$. By applying Lebesgue's Dominated Convergence Theorem to~\eqref{e:app:sy}, we get the identities
\begin{equation}
\label{e:app:sy0}
\left\{
\begin{array}{ll}
          V^0(\zz) = \displaystyle \bar U + \int_{+ \infty}^{\zz} \Psi \Big( V^0(y), \wcsz(y), 0, 0 \Big) \wcsz (y) \, dy  \\ \\
          \wcsz (\zz) = \displaystyle \exp{ \big( \bar \Lambda \zz \big) } \bwcs + 
          \displaystyle \int_0^{\zz} 
           \exp{ \big( \bar \Lambda (  \zz - y)  \big)}  
          \Big[ \Lambda \Big( V^0(y), \wcsz(y), 0, 0 \Big) - \bar \Lambda \Big] \wcsz(y) dy\;.  
\end{array}
\right.
\end{equation}
By relying on a  fixed point argument similar to the one in the proof of Lemma~\ref{l:contra}, we get that the solution $(V^0, \wcsz)$ of~\eqref{e:app:sy0} satisfying bounds~\eqref{e:bd0} is unique.  

\noindent
{\sc Step 3:} Last, we establish bound~\eqref{e:lim:bd2}. By relying on the representation formula~\eqref{e:app:sy0} and on estimates~\eqref{e:M2} and~\eqref{e:bd0}, we deduce that, for every $\zz \in [0, + \infty[$,
$$
     |V^0 (\zz) - \bar U| \leq \frac{2M c_W}{c}  \exp \big( - c \,  \zz / 2 \big)  \delta := \tilde c_V  \exp \big( - c \,  \zz / 2 \big)  \delta. 
$$
Finally, by comparing~\eqref{e:app:sy0} with~\eqref{S3: autonomous ODE} and~\eqref{e:redu}, we obtain that $V^0$ is a solution of the ODE~\eqref{e:lim:sy} satisfying conditions~\eqref{e:lim:cd}.
\end{proof}

\subsection{Conclusion of the proof of Theorem~\ref{t:main}}
\label{sus:conclu} Here, we complete the proof of Theorem~\ref{t:main}. 

We first denote by $V^s (\bar U)$ and $\mathcal{M}^s (\bar U)$ the stable space and stable manifold, respectively, for system~\eqref{e:i:1o} corresponding to the equilibrium point $(\bar U, \vec 0) $. Then, we change coordinates using matrix~\eqref{e:columns}. More precisely, every point $(V,W)\in V^s (\bar U)$ can be written in the form $W = \Phi_{cs} (\bar U, 0, 0, 0) \bwcs$ for some $\bwcs \in \R^k$. Next, we let $\theta$ and $\delta$ be as in Lemma~\ref{sss:app} and for given $\bwcs \in \R^k$ satisfying $|\bwcs| \leq \theta \delta$, we consider the unique solution $(V^0,\wcs^0)$ of~\eqref{e:app:sy0} satisfying bounds~\eqref{e:bd0}. Moreover, by varying $\bwcs$, we define the set 
$$
\mathcal M \doteq \Big\{ \big(V^0(0), \Phi_{cs} (\bar U, 0, 0, 0) \bwcs \big): \,\bwcs\in\R^k,\quad |\bwcs|\le \theta\delta \Big\} \subseteq \R^n \times \R^n.
$$ 

Now, we claim that the set $\mathcal M$ coincides with the stable manifold $\mathcal M^s (\bar U)$. To prove this claim, we first observe that system~\eqref{e:i:1o} is equivalent to system~\eqref{S3: autonomous ODE} provided that~\eqref{S3: autonomous ODE} is restricted to the invariant space $\{(V, W, \xi, \e): \; \xi = \e =0 \}\subseteq\R^n\times\R^n\times\R\times\R$. Hence, $\{ (V, W, \xi, \e): \; (V, W) \in \mathcal M^s (\bar U), \;   \xi = \e =0  \}\subseteq\mathcal M^{cs}$ for any given center-stable manifold $\mathcal M^{cs}$ of~\eqref{S3: autonomous ODE} corresponding to the equilibrium point $(\bar U, 0, 0, 0) $ and we recall that, by Lemma~\ref{l:redu}, solutions lying on $\mathcal M^{cs}$ satisfy~\eqref{e:redu}. Thus, to study the stable manifold $\mathcal M^s (\bar U)$ of~\eqref{e:i:1o}, it suffices to study the stable manifold to system~\eqref{e:redu}.
By recalling the proof of the Stable Manifold Theorem (see for example Perko~\cite[pp. 104-108]{Perko}) and the fact that $(V^0, \wcsz)$ satisfies~\eqref{e:bd0} and~\eqref{e:app:sy0} we finally establish that $\mathcal M=\mathcal M^s (\bar U)$. By taking $U_b=V^0(0)$ and by relying on Lemma~\ref{l:asar}, the proof of Part ($2$) of Theorem~\ref{t:main} is complete. Part ($1$) follows easily from Part ($2$) by defining $Q^\e(\xi)=\Ve( \xi/\e)$ for $\xi\in[0,\delta]$.

\section*{Acknowledgements}       
The authors would like to thank Professor Constantine Dafermos for proposing this project and Professors Stefano Bianchini and Athanasios Tzavaras for valuable discussions. Part of this work was done when Spinolo was supported by the Centro De Giorgi of Scuola Normale Superiore, Pisa, Italy, through a research contract. Christoforou was partially supported by the Start-Up Fund 2011-13 from University of Cyprus.  
\bibliography{biblio_t}
\end{document}